\documentclass[reqno, 11pt]{amsart}
\usepackage{ifthen}

\usepackage{setspace}
\usepackage[a4paper, left=3cm, right=3cm, top=4cm]{geometry}

\usepackage{amssymb, amsthm, epsf, epsfig}
\usepackage{amsmath}
\usepackage{verbatim}
\usepackage{caption}
\usepackage{subcaption}
\usepackage{dsfont}
\usepackage{wrapfig}
\usepackage[shortlabels]{enumitem}
\usepackage[usenames,dvipsnames]{xcolor}
\usepackage{esvect}
\usepackage{xfrac}
\usepackage{wrapfig}

\newtheorem{theorem}{Theorem}
\newtheorem{proposition}{Proposition}

\theoremstyle{definition}
\newtheorem{definition}{Definition}
\newtheorem{example}{Example}

\theoremstyle{remark}

\newcommand{\map}[1]{\textbf{#1}}

\numberwithin{equation}{section}

\title{Singularly perturbed discrete differential equations}

\author{Michael Drmota}
\author{Eva-Maria Hainzl}
\thanks{This work was partially founded by the Austrian Science
Foundation FWF, projects P 35016, F 100203 and ANR LOUCCOUM.}

\begin{document}

\maketitle 

\begin{abstract}
Discrete differential equations
appear most prominently in planar map and lattice path enumeration. In this work we consider
discrete differential equations with an additional parameter $x$, where
the order of the equation is $1$ for $x=0$ but $k> 1$ for $x\ne 0$. 
We call such equations singularly perturbed. 
The main contribution of this work is to show that there is actually a smooth transition
under certain natural assumptions. As an application of this result we consider
pattern counts in triangular planar maps and derive a central limit theorem
for these counts.
\end{abstract}

%++++++++++++++++++++++++++++++++++++++++++++++++++++++++++
%++++++++++++++++++++++++++++++++++++++++++++++++++++++++++

\section{Introduction} 

Discrete differential equations are a type of functional equation for a multivariate generating function $F(z,u)=\sum_{n\geq 0}F_n(z)u^n$ and are characterized by the appearance of \emph{discrete derivatives}\footnote{Some authors also refer to them as \emph{divided differences}.} of $F(z,u)$, defined as $\Delta^k F(z,u) = \sum_{n\geq k}F_n(z)u^{n-k}$.
    We call an equation 
    \[
        F(z,u) = zQ(z,u,F(z,u),\Delta F(z,u),\dots, \Delta^kF(z,u))
    \]
    for some $k \in \mathbb{N}$, \emph{discrete differential equation} (DDE). They are also known as catalytic equations. The order of the DDE is the highest order discrete derivative it contains.
    
    They appear most prominently in planar map enumeration~\cite{Tutte1963, Tutte1962c,Tutte1962a,BrownTutte} but also in statistical mechanics~\cite{Temp}, in the enumeration of lattice paths~\cite{BandFla2002, bm-walks, Bostanhab}, pattern avoiding permutations~\cite{Gire, Gui}, stack-sortable permutations~\cite{stacks2, stacks}, certain types of polyominoes~\cite{poly, bm-pol}, parking trees~\cite{park_tree_uni} and various other structures which are often in bijection with the former, such as Tamari intervals~\cite{Chap, Tamar} and fighting fish~\cite{Duchi}.

    In~\cite{DrmotaNoyYu,DHai}, the authors showed that solutions to first and second order DDEs show universal singular behavior and exploited this fact to derive central limit theorems for additional parameters $x$ that perturb the equation \emph{regularly}. That is, setting the parameter $x=1$ would not change the order of the DDE. Recently, Contat and Curien \cite{curcon} proved 
    a remarkable general (universal) asymptotic result for positive DDEs of arbitrary order but they did not consider additional parameters.

    In this work, we consider DDEs with an additional parameter $x$ \emph{singularly} perturbing a discrete differential equation such that
    \begin{equation*}
        F(z,u,x) = zQ\big(z,u,F(z,u,x),\Delta F(z,u,x)\big)+(x-1)zR\big(z,u,x,F(z,u,x),\dots, \Delta^kF(z,u,x)\big),
    \end{equation*}
    where $k>1$ and $Q$ and $R$ are rational functions. It is essential that the order of the DDE changes from $1$ for $x=1$ to $k > 1$ for $x\ne 1$. A solution theory of higher order DDE's is much more involved than for degree $1$ and it is a-priori not clear that there is a smooth transition from $x=1$ to $x\ne 1$ that is actually needed proving a central limit theorem for an associated random variable $X_n$ defined by
    \[
        \mathbb{P}\left(X_n = k\right) = \frac{[z^nx^k]F(z,0,x)}{[z^n]F(z,0,1)}, \quad k\geq 0.
    \]
    A natural application of such a central limit theorem appears in map enumeration, where $x$ marks, for example, a pattern occurrence of a fixed map $\map{p}$ (see Theorem~\ref{thm:patts}). The goal is to prove a central limit theorem for the random variable $X_n$ that counts the number of such pattern occurrences in random maps of size $n$, where every map of size $n$ is equally likely. 

    In the next section, we illustrate the singular perturbation along a simple example and indicate how the general situation can be handled. In Section~\ref{sec:trian}, we then deduce a central limit theorem for pattern counts of maps which cannot self-intersect in a random simple triangulation. Finally, we discuss future extensions of our results.
    
\section{Singularly perturbed discrete differential equations}

In what follows we will analyze a DDE for the unknown function $F(z,u,x)$ of the form
$F(z,u,x) = \sum_{i\geq 0} f_i(z,x)u^i$
    \begin{align}
        F(z,u,x) &= z Q\left(z,u, F(z,u,x), \Delta F(z,u,x) \right)    \label{eqpertDDE} \\
        &\qquad +(x-1)zR\big(z,u,x,F(z,u,x),\dots, \Delta^kF(z,u,x)\big), \nonumber
    \end{align}
    where $\Delta^k F(z,u,x) = \sum_{i\geq k} f_k(z,x)u^{i-k}$. 
We also assume that the functions $Q(z,u,y_0,y_1)$ is rational with non-negative coefficients and
that $R(z,u,y_0,y_1,y_2)$ rational. Both functions should be analytic in a sufficiently large 
region that contains the singularities and singular values of the solution function.
Furthermore we suppose that $Q(z,0,0,0) \ne 0$,  $Q_u(z,u,y_0,0) \ne 0$ and $Q_{y_0y_0}(z,u,y_0,y_1) \ne 0$ or $Q_{uy_0}(z,u,y_0,y_1) \ne 0$. 
We also assume that the coefficients of the power series expansion of 
$F(z,u,x)$ are non-negative.\footnote{By assumption on $Q$ this non-negativity property is immediate for the function $F(z,u,1)$, that is, for $x=1$. 
The non-negativity condition for $F(z,u,x)$ is not that easy to check in general.
However, if the coefficients of $F(z,u,x)$ have a combinatorial interpretation -- as it 
will be in our applications -- this condition is automatically satisfied.} 
Finally, we assume that the dependency graph of the equation (\ref{eqpertDDE}) is strongly connected for $x=1$. For this purpose we expand the left-hand and right-hand side of
(\ref{eqpertDDE}) into powers of $u$ and obtain an infinite system of equations for
$f_i(z,1)$:
\[
    f_i(z,1) = \tilde Q_{i}(z,1, f_0(z,1), \ldots, f_{i+k}(z,1)), \qquad (i\ge 0).
\]
We then associate to this infinite system a so-called dependency graph on the 
vertex set $V = \{0,1,\ldots\}$ with directed edges from $i$ to $j$ is $\tilde Q_i$ depends
on $f_j$. 

We recall that the asymptotic behavior of $[z^n] F(z,0,1) = [z^n] f_0(z,1)$ can
be completely characterized. From \cite{DrmotaNoyYu} we know that we have
\[
[z^n] f_0(z,1) \sim c_j\, n^{-5/2} \rho^n  \qquad (n\to \infty, \ n\equiv j \bmod d)
\]
for positive constants $c_j, \rho$ and for some residue classes $j \bmod d$, 
$j\in J$, where $d \ge 1$. Actually in \cite{DrmotaNoyYu} only the polynomial case is handled,
however, if $Q(z,u,y_0,y_1)$ is rational with a sufficiently large analyticity range
the same methods apply and, thus, the same result holds.

\begin{theorem}\label{thm:clt1dde}
    Let $F(z,u,x) = \sum_{i\geq 0} f_i(z,x)u^i$ be implicitely given by
    the DDE (\ref{eqpertDDE}), where $k\ge 2$,  $Q(z,u,y_0,y_1)$ is rational with non-negative coefficients, $R(z,u,y_0,y_1,\ldots, y_k)$ rational but linear in $y_k$ and both with large enough region of convergence. Further, let $Q(z,0,0,0) \ne 0$,  $Q_u(z,u,y_0,0) \ne 0$, 
    $Q_{y_1}(z,u,y_0,y_1) \ne 0$, and $Q_{y_0y_0}(z,u,y_0,y_1) \ne 0$ or $Q_{uy_0}(z,u,y_0,y_1) \ne 0$ and let the dependency graph of the equation be strongly connected for $x=1$.
    If $X_n$ is defined by
    \[
        \mathbb {P}[X_n = k] = \frac{[z^n x^k]\, F(z,0,x)  }{[z^n]\, F(z,0,1)}
        \qquad (n \equiv j \bmod d, \ j\in J)
    \]
    then $X_n$ ($n \equiv j \bmod d$, $j\in J$) satisfies a central limit theorem of the form
    \[
    \frac{X_n - \mu n}{\sqrt n} \to N(0,\sigma^2)  \qquad (n \equiv j \bmod d, \ j\in J)
    \]
    where $\mu, \sigma$ are non-negative constants and
    $\mathbb{E} X_n \sim \mu n$ and $\mathbb{V} X_n \sim \sigma^2 n$
    ($n \equiv j \bmod d$ with $j \in J$).
\end{theorem}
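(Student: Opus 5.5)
The plan is to show that, uniformly for $x$ in a small complex neighbourhood of $1$, the function $f_0(z,x)=F(z,0,x)$ has a square-root type singularity $(1-z/\rho(x))^{3/2}$ at a point $\rho(x)$ that depends analytically on $x$ with $\rho(1)=\rho$, and that the other dominant singularities on $|z|=\rho(x)$ come only from the periodicity $d$. Once this is established, a transfer theorem gives
\[
[z^n]F(z,0,x)\sim c_j(x)\,n^{-5/2}\rho(x)^{-n}\qquad (n\equiv j \bmod d),
\]
uniformly in $x$ near $1$. The quasi-power theorem of Hwang then yields the central limit theorem, with $\mu=-\rho'(1)/\rho(1)$ and $\sigma^2$ given by the standard formula in $\rho'(1),\rho''(1)$. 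The conditions $Q_{y_0y_0}\ne0$ or $Q_{uy_0}\ne0$ guarantee the $3/2$ exponent, and hence $n^{-5/2}$, at $x=1$, exactly as in \cite{DrmotaNoyYu}.

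The main step is the perturbation argument. I would not use the order-$k$ kernel method directly. Instead I would rewrite (\ref{eqpertDDE}) as a fixed point system. The key observation is that $\Delta^j F$ for $j\ge2$ can be expressed through $F$ and the finitely many unknown functions $f_0,\dots,f_{k-1}$:
\[
\Delta^j F(z,u,x)=\frac{F(z,u,x)-\sum_{i<j}f_i(z,x)u^i}{u^j}.
\]
For $x=1$, the order-$1$ equation is solved by Tutte's kernel method, or equivalently Bousquet-Mélou–Jehanne, through the system
\[
P=0,\qquad P_{y_0}=0,\qquad P_u=0,
\]
in the unknowns $(u,y_0,f_0)$. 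The assumption $Q_u(z,u,y_0,0)\ne0$ is what makes this work. At $z=\rho$ the Jacobian of this system has corank one, which produces the square-root behaviour. For $x\ne1$, I would treat the extra unknowns $f_1,\dots,f_{k-1}$, together with $f_0$, as parameters. Because $R$ is linear in $y_k$, the equation can be solved for $F$ as a first-order-type equation in which the perturbation $(x-1)R$ is analytic in all variables.

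Concretely, I would view the equation as
\[
F=\Phi(z,u,x,F,\Delta F;f_0,\dots,f_{k-1}),
\]
with $\Phi$ analytic and reducing to $zQ$ at $x=1$. For $x$ close to $1$, the kernel $1-\Phi_{\Delta F}/u-\cdots$ still has exactly one small root $u_1(z,x)$ near the $x=1$ root. This holds by Rouché / the Weierstrass preparation theorem, using $Q_{y_1}\ne0$. The remaining $k-1$ small roots that an order-$k$ equation requires are born from $u=0$ as $x\to1$, since their coefficients carry the factor $(x-1)$. This is the singular perturbation. I expect them to behave like $u\approx c\,(x-1)^{1/(k-1)}$ times analytic functions.

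The hardest part is to show that plugging these roots into the equation gives a system
\[
G(z,x,f_0,\dots,f_{k-1},u_1,y)=0
\]
that is analytic in $(x-1)$, rather than in fractional powers of $(x-1)$. I would try to obtain this by symmetrizing over the $k-1$ small roots: the elementary symmetric functions of these roots are analytic, through a Weierstrass polynomial. With that in hand, the implicit function theorem at $x=1$ determines $f_1,\dots,f_{k-1}$ analytically in terms of $(z,x,f_0)$ near every point $z$ with $|z|\le\rho$. What remains is the same three-equation system as at $x=1$, now with analytic $x$-dependence. By the implicit function theorem applied to the corank-one singular system, the singular point $\rho(x)$ is analytic and the $3/2$-singularity persists.

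Non-negativity of the coefficients and strong connectivity are then used to confine the dominant singularities to $|z|=\rho(x)$ with the same periodic pattern. This is by an aperiodicity/Pringsheim argument for real $x$ near $1$ combined with continuity for complex $x$, exactly as in \cite{DrmotaNoyYu,DHai}. Finally, I would verify $\sigma^2\ge0$. Positivity is not claimed, so the degenerate case $\sigma=0$ is allowed, and then the normal limit $N(0,0)$ is trivial.
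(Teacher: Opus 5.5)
Your overall architecture is the paper's: one root $u_1$ deforming smoothly from the first-order kernel root, $k-1$ further roots born at $u=0$ of size $(x-1)^{1/(k-1)}$, elimination of $f_1,\dots,f_{k-1}$ as analytic functions of $(z,x,f_0)$ from the equations at the small roots, reduction to a three-equation first-order system with analytic $x$-dependence, persistence of the $3/2$-singularity (the paper quotes Theorem 2 of \cite{DHai}), and the quasi-power theorem.

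The gap is the central step, where you say that after symmetrizing, ``the implicit function theorem at $x=1$ determines $f_1,\dots,f_{k-1}$''. For the equations $P=P_u=P_T=0$ at the small roots in the original unknowns this fails. The unknowns $f_1,\dots,f_{k-1}$ enter only through $\Delta^2F,\dots,\Delta^kF$, hence only inside $(x-1)R$. So their Jacobian columns vanish at $x=1$, while the small roots sit at $u=0$, where those terms have poles. In the paper's example ($k=2$, critical value $x=0$ after replacing $x-1$ by $x$), at $x=0$, $u_2=0$ the three equations collapse to $0=0$, $\tilde T_2=t_0$, $0=0$, and $t_1$ is left undetermined. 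Symmetrization removes only the branching in $(x-1)^{1/(k-1)}$, not this degeneracy. For $k=2$ there is nothing to symmetrize, yet the problem is fully present.

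The missing idea is a blow-up of the small roots.
\begin{itemize}
\item For $k=2$ the paper sets $\overline u_2=xV$ and $\overline T_2=t_0+xN$, so $\Delta^2T=(N-Vt_1)/(xV^2)$. Linearity of $R$ in $y_2$ is exactly what makes this $1/x$ cancel against the prefactor $x$.
\item So linearity is not there to make ``$(x-1)R$ analytic in all variables''; it has a pole of order $k$ at $u=0$. Its role is to make the rescaled system regular at the critical value.
\item The rescaled system has the explicit limit $V=-1$, $N=(1-t_0)/z$, $t_1=(t_0-1)/z$. The last relation is the $u^0$-coefficient relation of the unperturbed equation.
\item Its Jacobian determinant there is $-(t_0-1)^2/z\ne 0$ for $z\ne0$, $t_0\ne1$. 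So your ``near every $z$ with $|z|\le\rho$'' must exclude $z=0$, which is harmless.
\end{itemize}
Only after this rescaling does the implicit function theorem give $\overline t_1(z,x,t_0)$ analytically.

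For $k>2$ one takes $X=(x-1)^{1/(k-1)}$, $\overline u_j=XV_j$ and $\overline T_j=t_0+XV_jt_1+\cdots+X^{k-1}V_j^{k-1}t_{k-1}+X^kN_j$. This is where your symmetrization idea properly belongs. The system is invariant under $X\mapsto\omega X$ with $\omega^{k-1}=1$, which permutes the small roots. Together with uniqueness in the implicit function theorem, this shows the $\overline t_j$ depend only on $X^{k-1}=x-1$.
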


We will first consider an example and illustrate the proof of  
Theorem~\ref{thm:clt1dde} along this case in detail.

\begin{example} Let $T(z,u,x) = \sum_{k\geq 0} t_k(z,x)u^k$ be implicitly defined by
\[
    T(z,u,x) = 1+z^2uT(z,u,x)^2+z\Delta T(z,u,x)+ zx \Delta^2 T(z,u,x).
\]
For the sake of notational simplicity we have changed $x-1$ to $x$. The {\it critical} value is now $x=0$ (and not $x=1$).
\end{example}
First we analyze quickly the (critial) case $x=0$. Here we have the equation
\[
T = 1+z^2uT^2+z\Delta T 
\]
or in polynomial terms
\[
P_1(z,u,T,t_0) = u(1+z^2uT^2+z\Delta T-T) = u+z^2u^2T^2+z(T-t_0)-uT
\]
(with $t_0(z) = T(z,0,0)$). By the theory of \cite{BMJ} we have to solve the system
\begin{equation}\label{eqP1system}
 P_1(z,u,\tilde T,t_0) = 0, \quad P_{1,u}(z,u,\tilde T,t_0) = 0, \quad P_{1,T}(z,u,\tilde T,t_0) = 0
\end{equation}
to obtain functions $u(z)$, $t_0(z)$, $\tilde T(z) = T(z,u(z),0)$. 
By \cite{DrmotaNoyYu} we know that these three functions have a common dominant singularity 
$z_0$. More precisely, the functions $u(z)$ and $\tilde T(z)$ have a square-root singularity 
at $z_0$ whereas $t_0(z) = T(z,0,0)$ has a $3/2$-singularity:
\[
t_0(z) = g_0(z) + h_0(z) (1 - z/z_0)^{3/2}
\]
with analytic functions $g_0(z), h_0(z)$ that satisfy $g_0(z_0) \ne 0, h_0(z_0) \ne 0$.
The singularity $z_0$ can be determined by system (\ref{eqP1system}) together with 
the equation
\begin{equation}
    \label{eqP1system-erg}
    \left( P_{1,uu}P_{1,TT} -P_{1,uT}^2  \right)(z,u,\tilde T, t_0) = 0,
\end{equation}
compare with \cite{DrmotaNoyYu}. Actually (\ref{eqP1system}) together with
(\ref{eqP1system-erg}) determine also the values $u(z_0)$, $\tilde T(z_0) = T(z_0,u(z_0))$, 
and $t_0(z_0) = T(z_0,0)$.

There might be further singularities of this type on the circle of convergence $|z| = z_0$, however, they will differ only by a $d$-th root of unity: $z_j = z_0 \exp(2\pi i j/d)$ with implies then
that the asymptotic expansion of the $n$-th coefficient is given by
\[
[z^n]t_0(z) \sim c_j\, n^{-5/2} z_0^{-n} \qquad (n\to \infty, \ n\equiv j \bmod d)
\]
with $j\in J$ positive constants $c_j$.

The main goal is to show that the function $T(z,0,x)$ behaves similarly if $x\approx 0$, that 
is, for complex $x$ close to $0$ we have
\begin{equation}\label{eq32expansion}
T(z,0,x) = g(z,x) + h(z,x) (1 - z/z_0(x))^{3/2}   
\end{equation}
for proper analytic functions $g(z,x)$, $h(z,x)$, $z_0(x)$ with
$g(z,0) = g_0(z)$, $h(z,0) = h_0(z)$, and $z_0(z) = z_0$. From this representation it 
immediately follows that
the coefficients satisfy 
\[
[z^n]T(z,0,x) \sim c_j(x) \, n^{-5/2} z_0(x)^{-n} \qquad (n\to \infty, n\equiv j \bmod d)
\]
with $j\in J$ and with analytic functions $c_j(x)$ with $c_j(0) = c_j$.
By standard theory (by applying Hwang's Quasi Power Theorem, for example, see \cite{DrmotaTrees}) 
the central limit theorem follows then directly (provided that $z_0'(1) \ne 0$). 

The essential point is that second order DDE's (and higher order DDE's, too) need
a more involved treatment than first order DDE's. We set now
\begin{align*}
P(z,u,T, t_0,t_1,x) &= u^2 \left( 1+z^2uT^2+z\Delta T+ zx \Delta^2 T -  T \right)  \\
&= u P_1(z,u, T,t_0) + zx ( T - t_0 - ut_1) \\
&= u^2+z^2u^3T^2+zu(T-T_0) +zx ( T - t_0 - ut_1)  -u^2 T,    
\end{align*}
where $t_0(z,x) = T(z,0,x)$ and $t_1(z,x) = T_u(z,0,x)$.\footnote{The factor $u^2$ is just used
to have a polynomial equation. This is not really necessary but convenient.}
Here we have to solve the system of equations (see \cite{BMJ})
\begin{align*}
P(z,u_1,\tilde T_1,t_0,t_1,x) &= 0, \quad P_u(z,u_1,\tilde T_1,t_0,t_1,x) &= 0, \quad 
P_T(z,u_1,\tilde T_1,t_0,t_1,x) &= 0, \\
P(z,u_2,\tilde T_2,t_0,t_1,x) &= 0, \quad P_u(z,u_2,\tilde T_2,t_0,t_1,x) &= 0, \quad 
P_T(z,u_2,\tilde T_2,t_0,t_1,x) &= 0
\end{align*}
for the unknown functions
$u_1(z,x)$, $u_2(z,x)$, $t_0(z,x)$, $t_1(z,x)$, $\tilde T_1(z,x) = T(z,u_1(z,x),x)$, and
$\tilde T_2(z,x) = T(z,u_2(z,x),x)$.

Since this is an algebraic system we can apply a proper elimination procedure
that leads to the following equation for $u_1(z,x)$ and $u_2(z,x)$ that
are determined by the property that $u_1(0,x) = u_2(0,x) = 0$ (and it will
be clear from the structure of the equations that there are precisely 
two different solutions of that kind). 
\begin{align*}
    0 = 2u^8z^3- u^6z^2 + u^5z^3&+xu^4z(54u^{6}z^2 - 36u^4z 
    + 42u^3z^2  + 4u^2 
    - 12uz  
    + 9z^2)\\
    &+x^2u^3( - 36u^4z^2 
     + 78u^3z^3  - 4u^3  + 20u^2z  
     - 42uz^2  
     + 30z^3)\\
    &+x^3zu^2(38u^3z^2+ 16u^2 
     - 52uz
     + 46z^2) \\
     &+x^4uz^2( - 21u 
      + 33z) + 9x^5z^3
\end{align*}
\begin{figure}[h]
    \centering
    \includegraphics[width=0.2\linewidth]{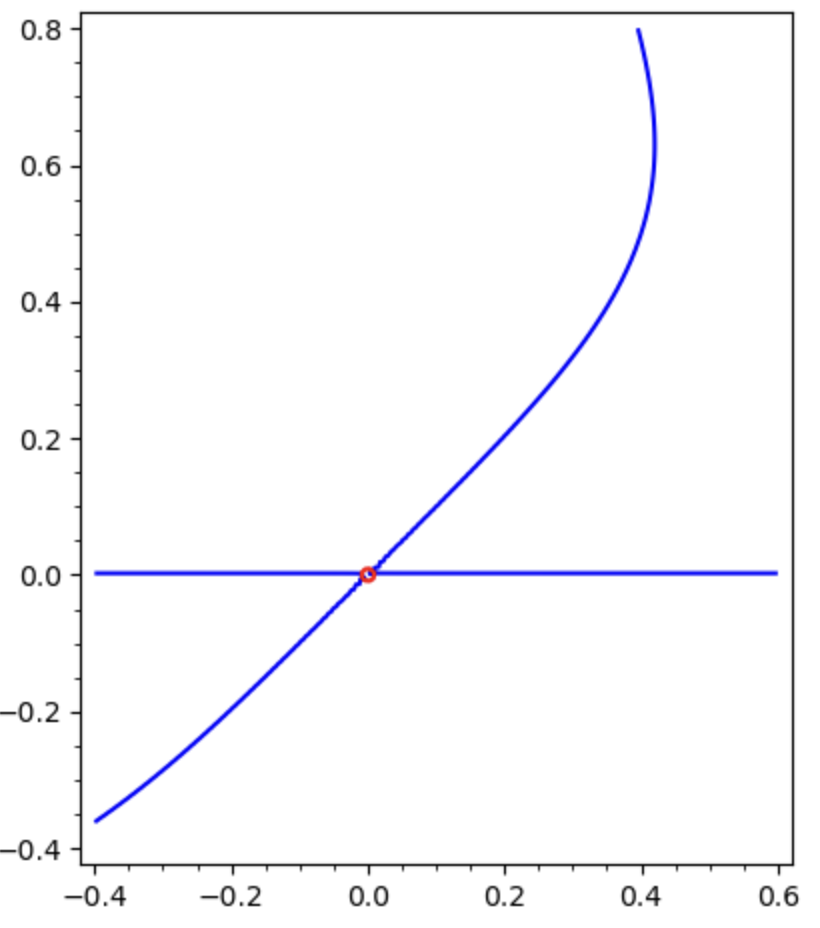}
    \includegraphics[width=0.2\linewidth]{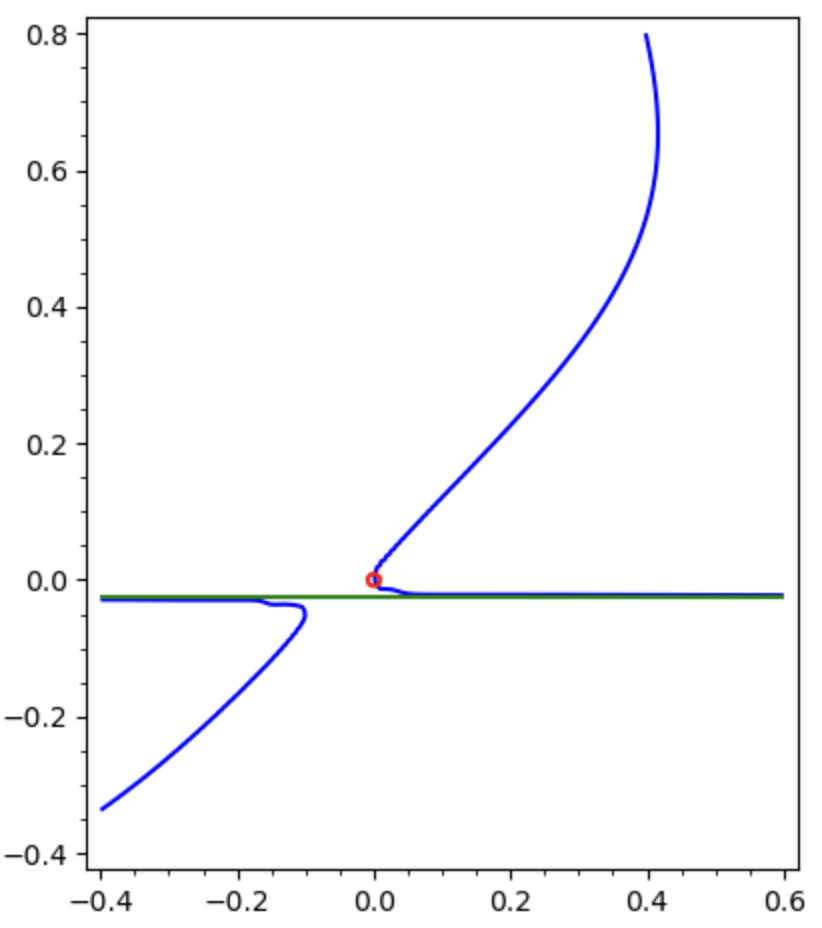}
    \includegraphics[width=0.2\linewidth]{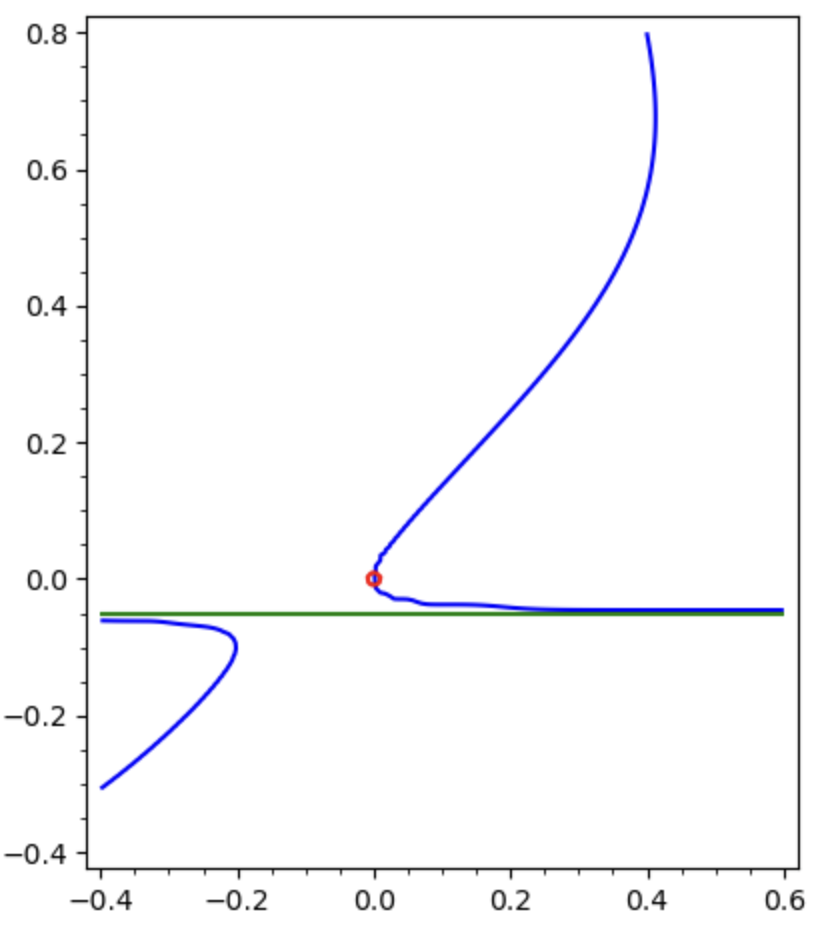}
    \includegraphics[width=0.2\linewidth]{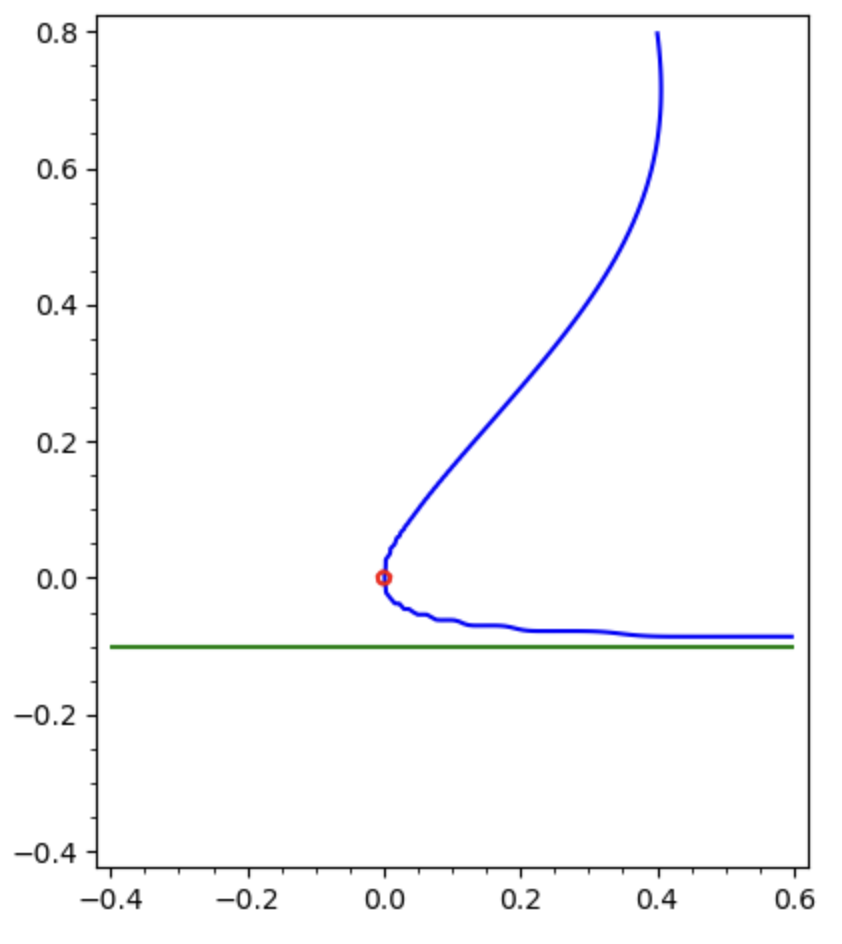}\\ \vspace{2mm}
    \includegraphics[width=0.2\linewidth]{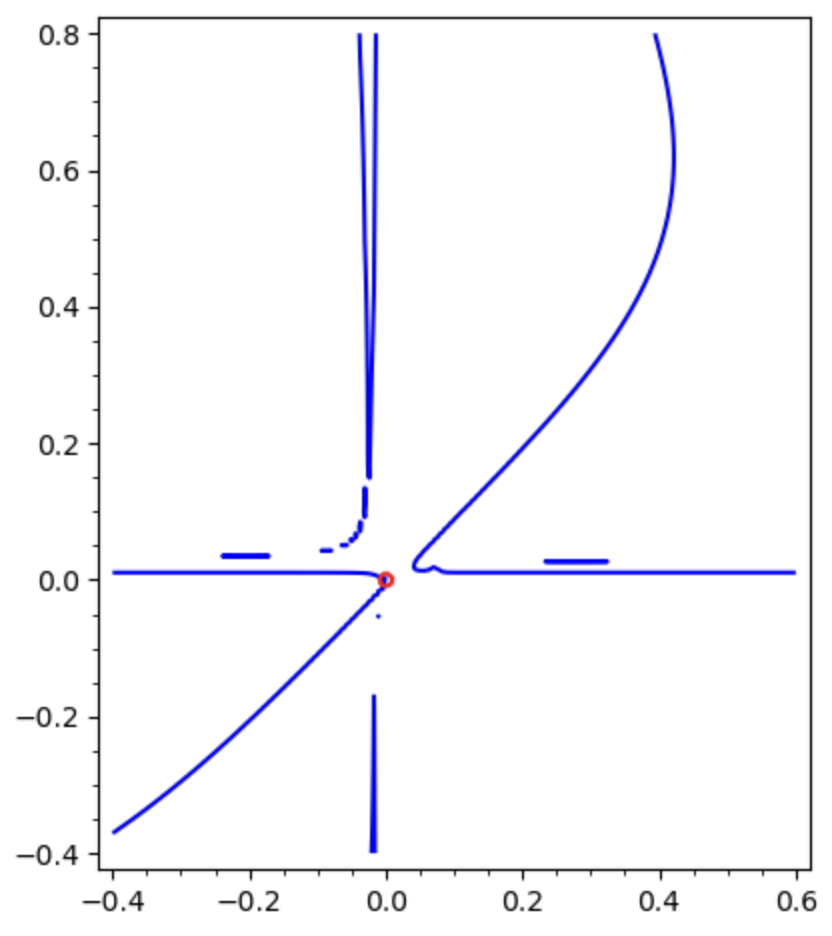}
    \includegraphics[width=0.2\linewidth]{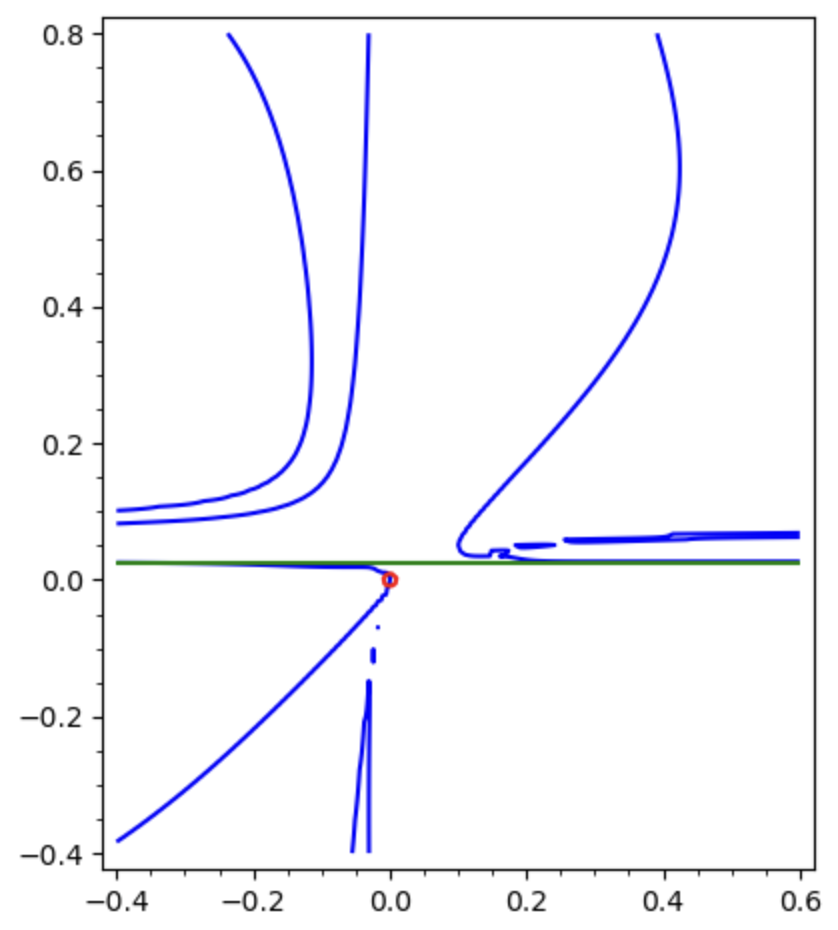}
    \includegraphics[width=0.2\linewidth]{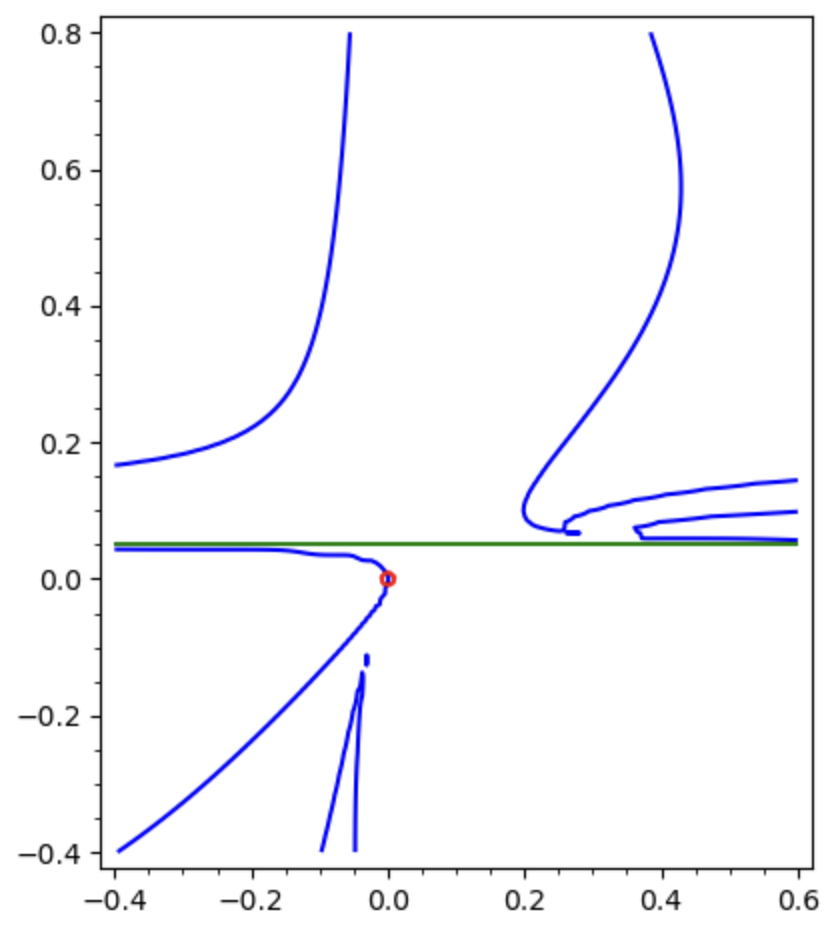}
    \includegraphics[width=0.2\linewidth]{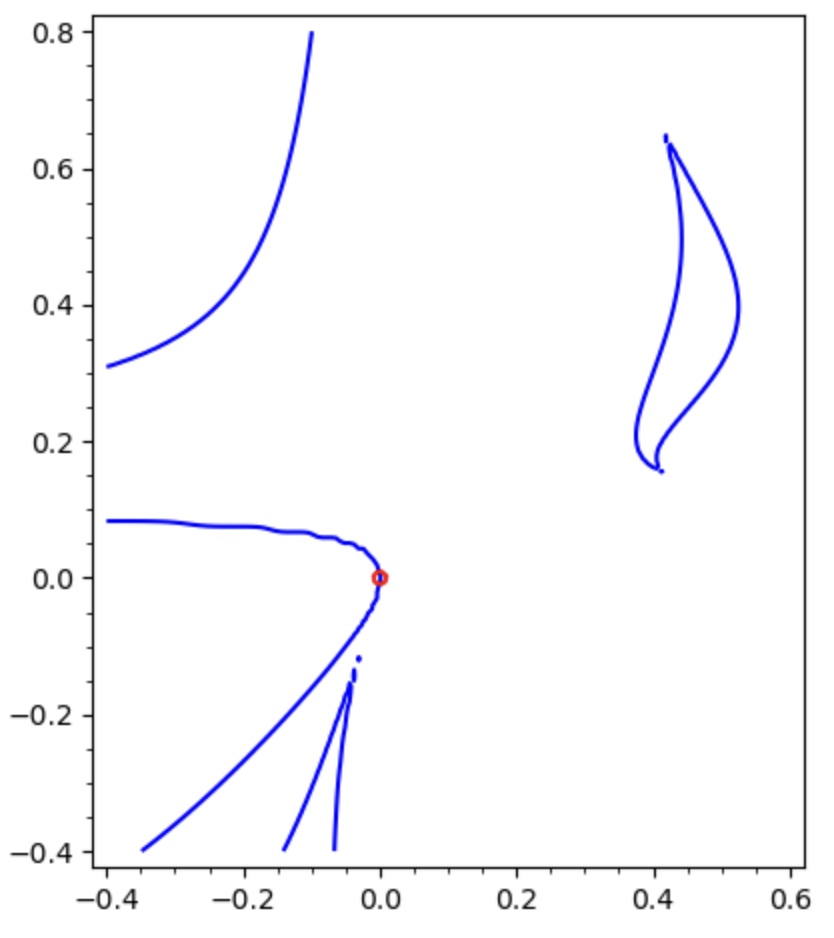}
    \caption{The solutions $u_1(x), u_2(x)$ for the values $x= 0, 0.025, 0.05, 0.1$ in the first line and $x= -0.01, -0.025, -0.05, -0.1$ in the second line.\\
    The red dot marks the origin, the green line the negative value of $x$ to show that $0>u_2(z) > -x$ for $z>0$.}
    \label{fig:plots_u0}
\end{figure}
In Figure~\ref{fig:plots_u0} we can see the (real valued) solutions to this equation for different values of $x$. As $x\rightarrow 0$ two separate curves meet at the origin and create the solutions $u_1(z,0) = u_1(z)$ and $u_2(z,0)=0$.  

The corresponding equation for $t_0(z,x) = T(z,0,x)$  is
\begin{align*}
    0 &= 11664t_0^5x^4z^{10} - 7776t_0^4x^3z^8 - 864t_0^4x^2z^9 + 16200t_0^3x^3z^8 - 1728t_0^3x^3z^5 + 1080t_0^3x^2z^6 \\
    &\quad + 288t_0^3xz^7 + 16t_0^3z^8 + 21600t_0^2x^3z^5 + 1620t_0^2x^2z^6 - 24t_0^2xz^7 - 45000t_0x^3z^5  \\
    &\quad - 1500t_0x^2z^6 + 25000x^3z^5 + 576t_0^2x^2z^3 + 136t_0^2xz^4 + 8t_0^2z^5 - 2640t_0x^2z^3 - 602t_0xz^4 \\
    &\quad - 36t_0z^5 + 2000x^2z^3 + 450xz^4 + 27z^5 + 64t_0x^2 + 16t_0xz + t_0z^2 - 64x^2 - 16xz - z^2
\end{align*}
The implicit plot around $t_0(0,x) = T(0,0,x)=1$ is shown in Figure~\ref{fig:plots_t0} for different values of $x$. As one can see, the trajectory of $t_0(z,x) = T(z,0,x)$ only differs slightly through the small perturbation by $x$.

\begin{figure}[h]
    \centering
    \includegraphics[width=0.2\linewidth]{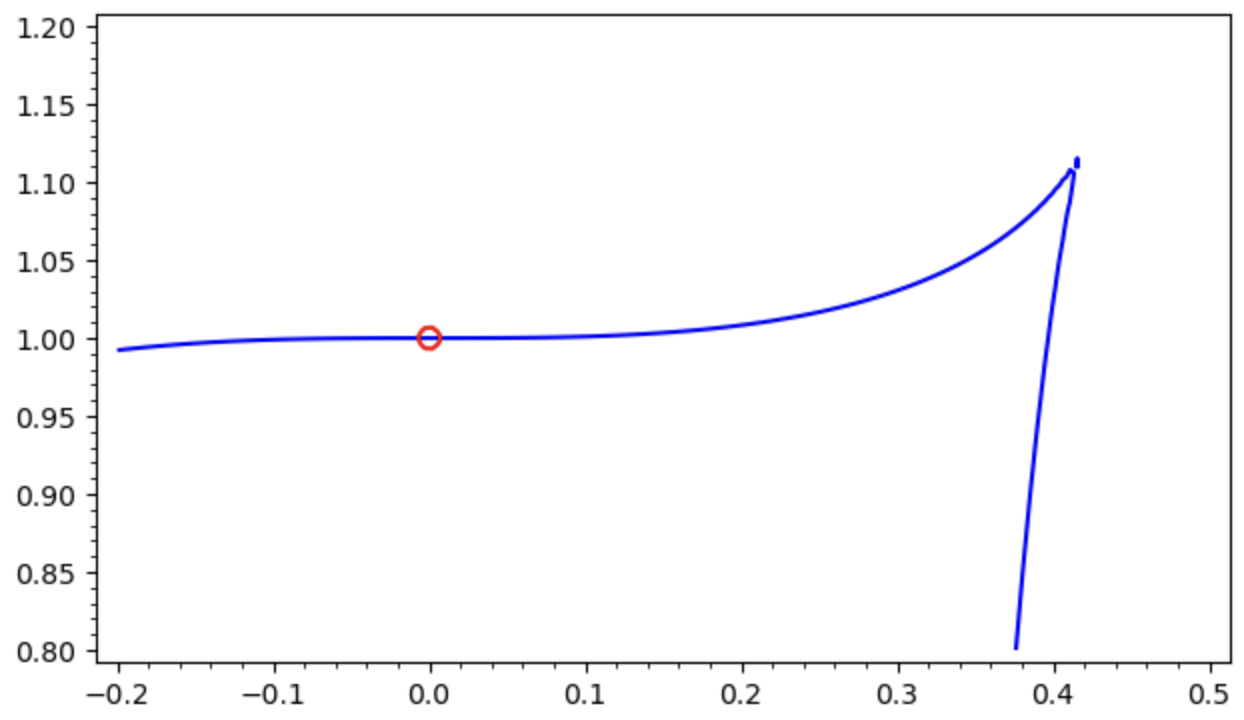}
    \includegraphics[width=0.2\linewidth]{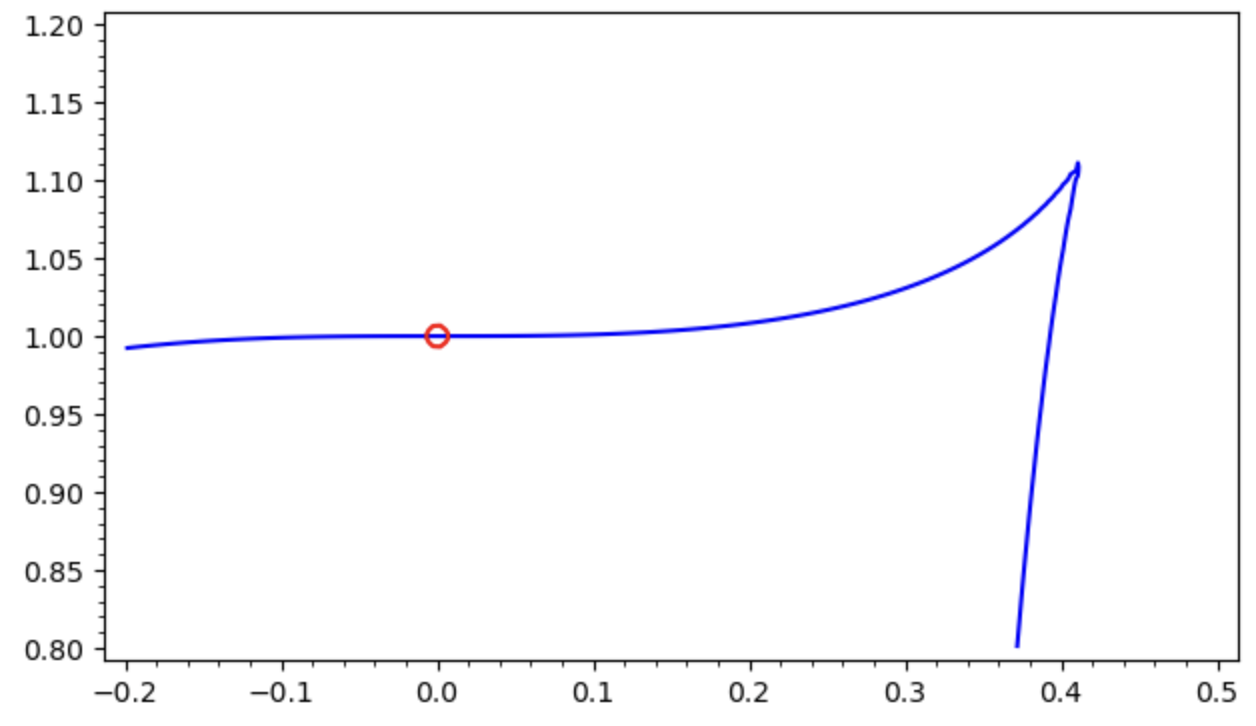}
    \includegraphics[width=0.2\linewidth]{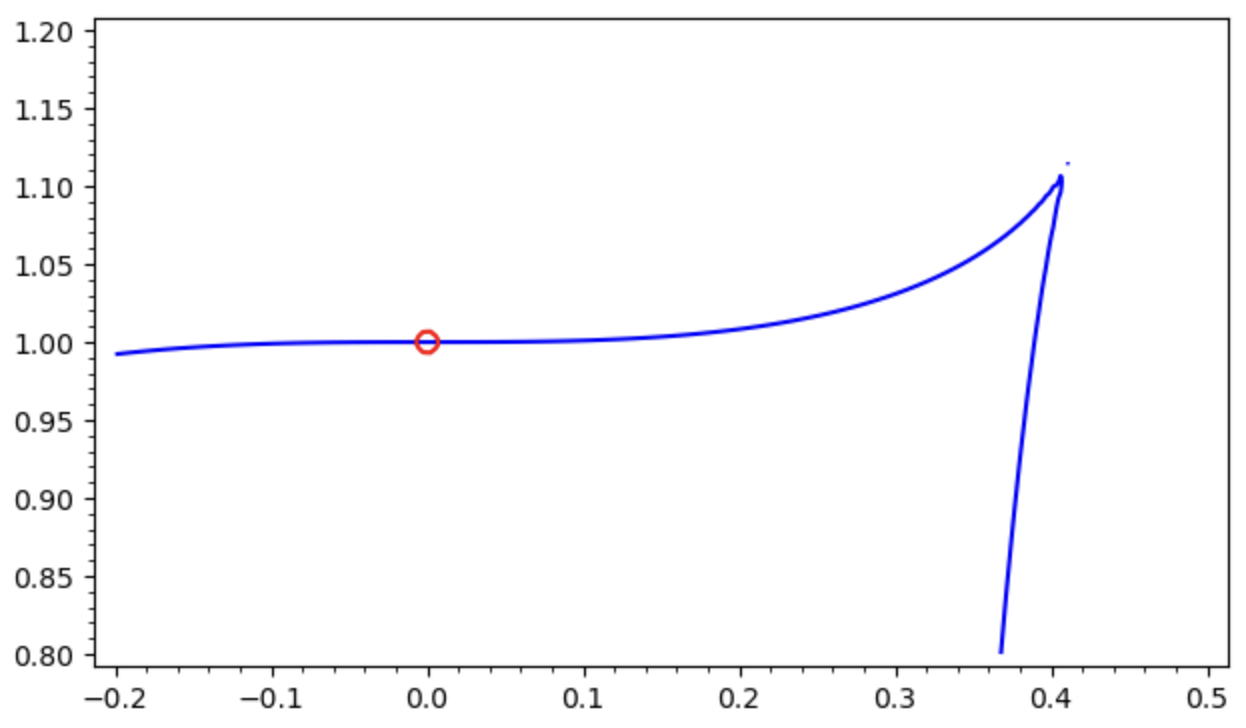}
    \includegraphics[width=0.2\linewidth]{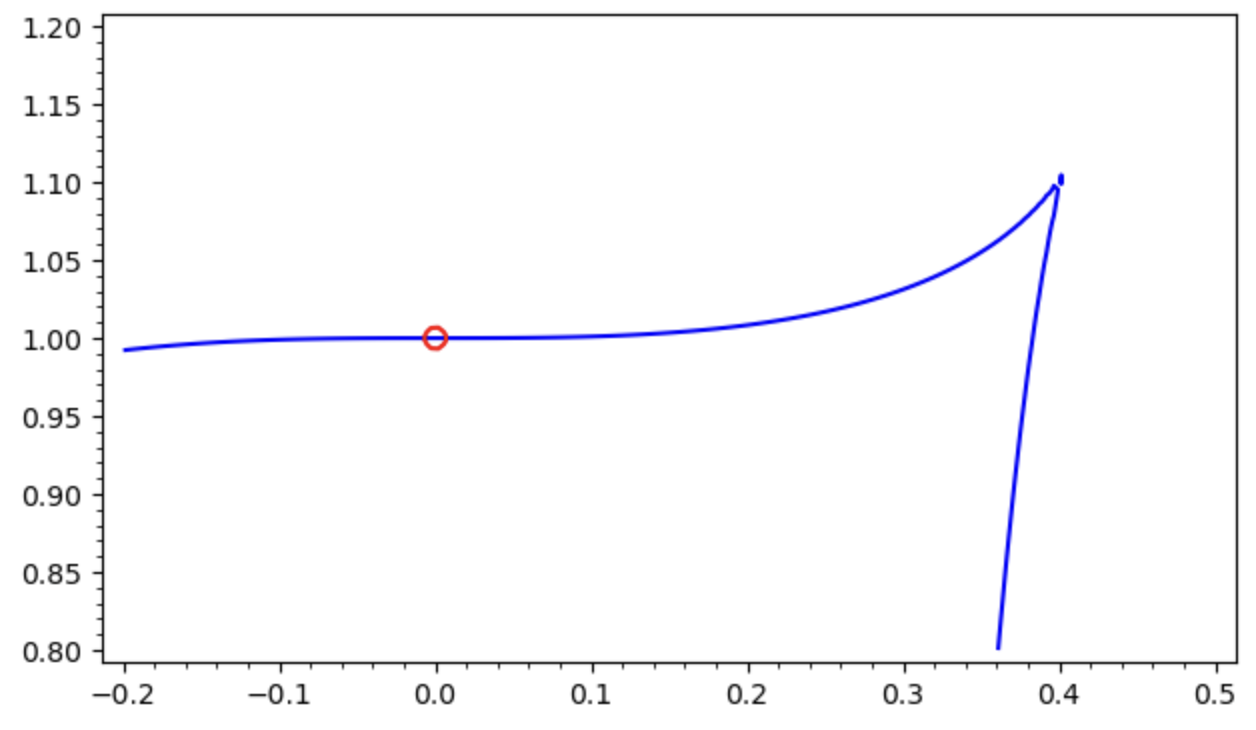}\\ \vspace{4mm}
    \includegraphics[width=0.2\linewidth]{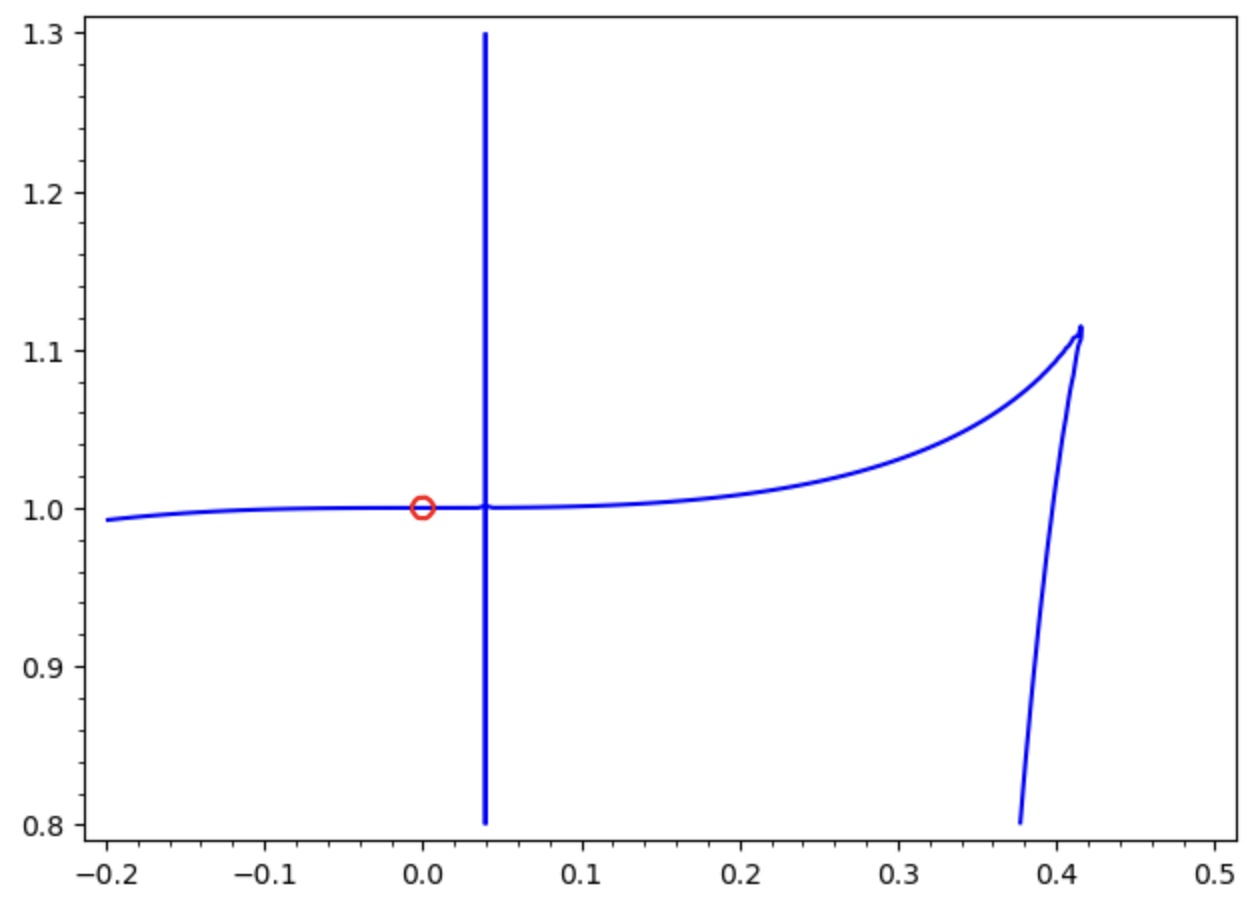}
    \includegraphics[width=0.2\linewidth]{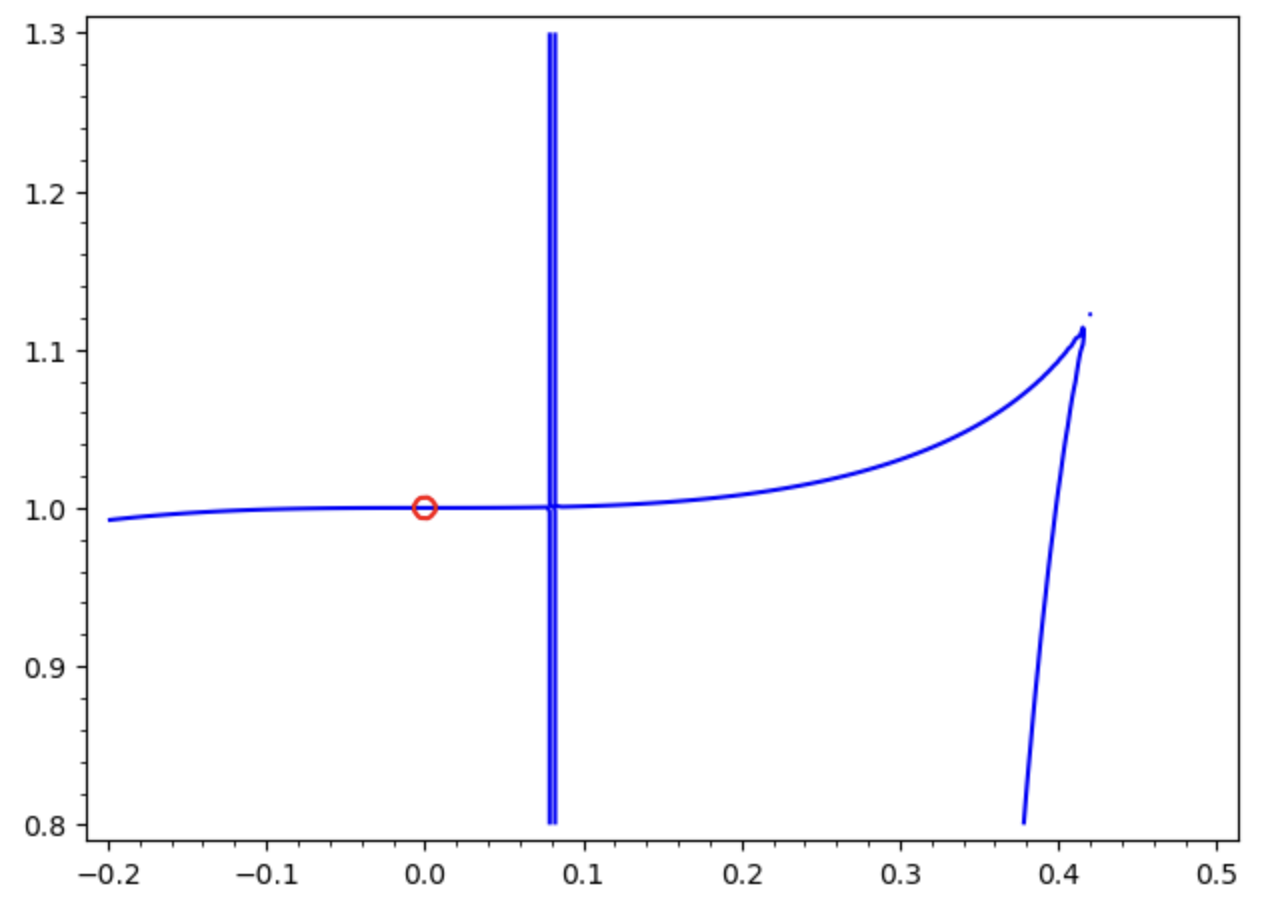}
    \includegraphics[width=0.2\linewidth]{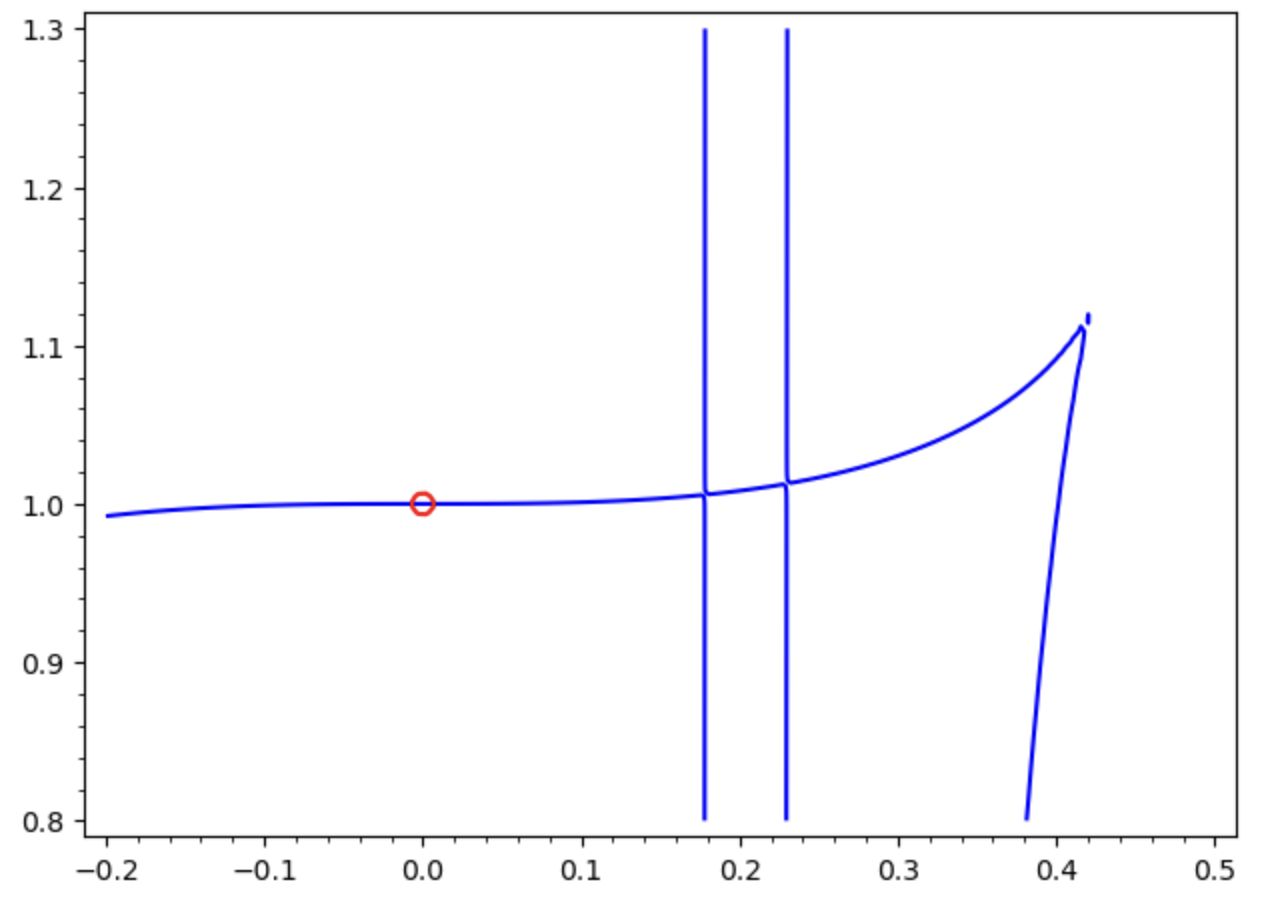}
    \includegraphics[width=0.2\linewidth]{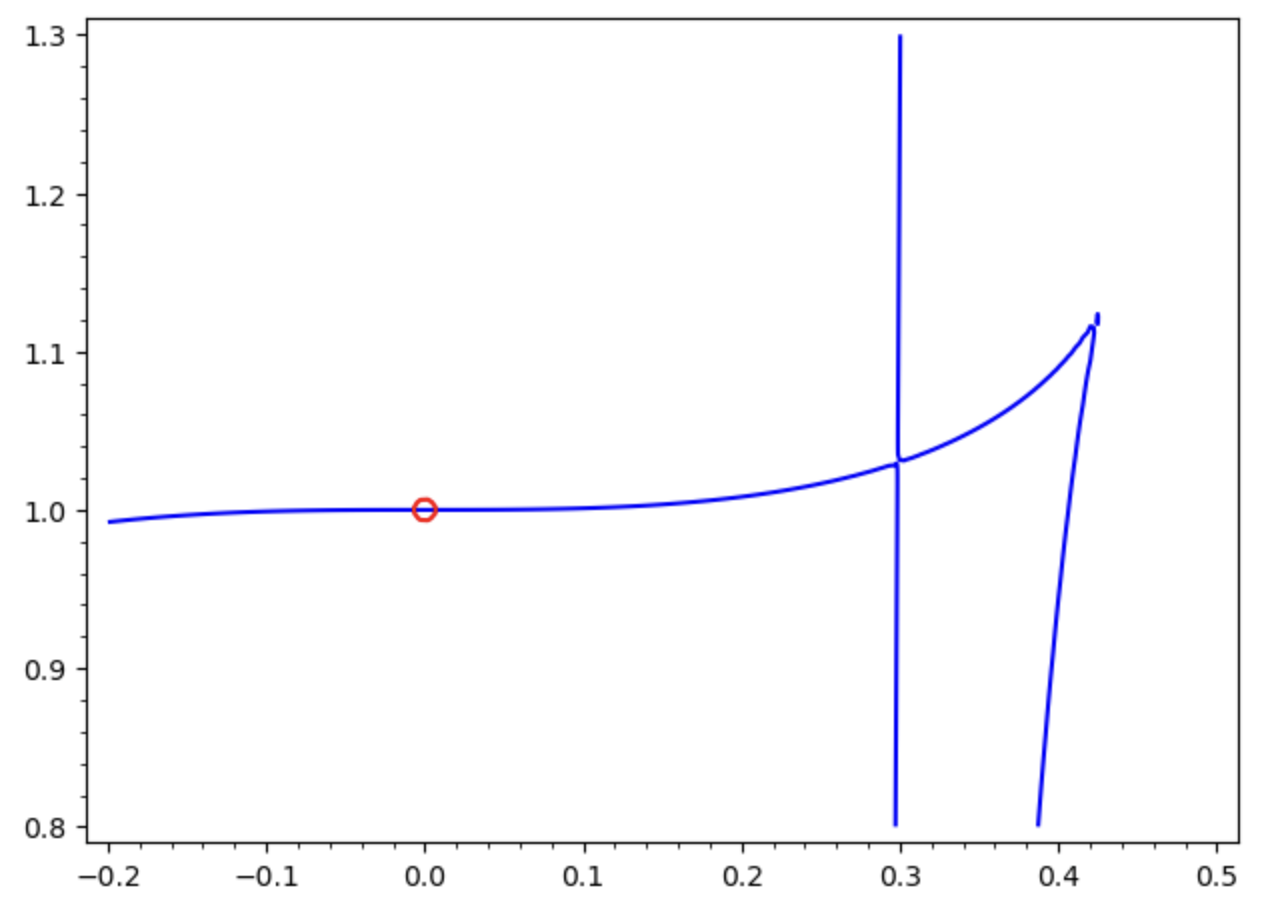}
    \caption{The solution $T(z,0,x)$ for the values $x= 0, 0.025, 0.05, 0.1$ and in the second line $x=-0.005,-0.01,-0.025, -0.05$. The red dot marks the origin.}
    \label{fig:plots_t0}
\end{figure}

We make now a rigorous analysis of these solution functions.
Actually, if $x\to 0$ then it is clear from the equation 
$P(z,u,T,t_0,t_1,x) = 0$ 
that one of the solutions $u_1(z,x)$, $u_2(z,x)$ tends to $0$ as $x\to 0$; we denote
this solution as $u_2(z,x)$. Since 
\[
P_T(z,u_2,\tilde T_2,t_0,t_1,x) = 2zu_2^3 \tilde T_2 + zu_2 - u_2^2 + zx = 0
\] 
it follows that 
\[
u_2(z,x) \approx -x
\]
if $z \ne 0$. Actually it is easy to see that $u_1(z,x) \sim \sqrt{zx}$
and $u_2(z,x) \sim -\sqrt{zx}$ if $|z|\le |x|$. If $|z|\ge |x|$ then 
the behavior changes to $u_1(z,x) \sim u_1(x)$ and $u_2(z,x) \sim -x$ 
(as $x\to 0$).

Assume for a moment that $x\ne 0$. Then we use the second set of equations
\begin{equation}\label{eqsecondset}
    P(z,u_2,\tilde T_2,t_0,t_1,x) = 0, \quad P_u(z,u_2,\tilde T_2,t_0,t_1,x) = 0, \quad 
P_T(z,u_2,\tilde T_2,t_0,t_1,x) = 0
\end{equation}
and solve them with respect to $u_2$, $\tilde T_2$, and $t_1$, that is, we consider
$z,x,t_0$ as parameters and obtain solution functions
\[
\overline u_2(z,x,t_0), \quad \overline T_2(z,x,t_0), \quad \overline t_1(z,x,t_0).
\]
Actually we only need the function $\overline t_1(z,x,t_0)$ that we
substitute into the first three equations:
\begin{align*}
P(z,u_1, \tilde T_1, t_0, \overline t_1(z,x,t_0), x) &= 0, \\
P_u(z,u_1, \tilde T_1, t_0, \overline t_1(z,x,t_0), x) &= 0, \\
P_T(z,u_1, \tilde T_1, t_0, \overline t_1(z,x,t_0), x) &= 0.
\end{align*}
If we set $R(z,u,T,t_0,x) = P(z,u,T,t_0,\overline t_1(z,x,t_0),x)$ then this
just reads as 
\begin{equation}\label{eqRsystem}
R(z,u,T,t_0,x) = 0, \quad R_u(z,u,T,t_0,x) = 0, \quad R_T(z,u,T,t_0,x) = 0
\end{equation}
which is precisely a system of the kind that appears in \cite{DNY2021} in order
to handle first order DDE's. As mentioned above the essential point is to 
show that this system behaves {\it smoothly} as $x\to 0$ and is equivalent to
the system (\ref{eqP1system}) for $x=0$. 

Let's start with the elimination process that computes 
$\overline u_2(z,x,t_0)$, $\overline T_2(z,x,t_0)$, and $\overline t_1(z,x,t_0)$.
For this purpose we replace the functions $\overline u_2$ and  $\overline T_2(z,x,t_0)$ with
the function $V(z,x,t_0)$ and $N(z,x,t_0)$ by setting
\[
\overline u_2(z,x,t_0) = x V(z,x,t_0),\quad 
\overline T_2(z,x,t_0) = t_0 + x N(z,x,t_0).
\]
With this choice the system (\ref{eqsecondset}) rewrites to 
\begin{align}
    V^2 + z^2x V^3 (t_0+xN)^2 + zVN - V^2(t_0+xN) + z(N - Vt_1) &=0, \nonumber \\
    2V + 3z^2xV^2 (t_0+xN)^2 + zN - 2V(t_0+xN) - zt_1 &= 0, \label{eqsecondset-2} \\
    2z^2x^2V^3(t_0+xN) + zV - xV^2 + z &= 0.  \nonumber
\end{align}
For $x=0$ (and arbitrary $z\ne 0$) this system has the solution
\[
V_0 = -1, \quad N_0 = \frac{1-t_0}{z}, \quad t_1 = \frac{t_0-1}{z}.
\]
The Jacobian of the system (\ref{eqsecondset-2}) is given by the matrix 
\[
J = \left(    
\begin{array}{ccc}
0    &    0    &   -zV  \\
2 + 6z^2xV (t_0+xN)^2 - 2(t_0+xN)   &  6z^2x^2V^2 (t_0+xN) + N - 2xV & 2V  - z  \\
6z^2x^2V^2(t_0+xN) + z - 2xV & 2z^2x^3V^3N  & 0
\end{array}
\right)
\]
and its determinant by
\begin{align*}
      \det J &= -zV \big(  (2 + 6z^2xV (t_0+xN)^2 - 2(t_0+xN)) 2z^2x^3V^3N \\
     &   \qquad \qquad  
- (6z^2x^2V^2 (t_0+xN) + N - 2xV )^2
\big).
\end{align*}
For $x=0$ the determinant has value $zV_0 N_0^2 = - (t_0-1)^2/z$. So it is non-zero 
for $z\ne 0$ and $t_0 \ne 1$. Actually we are interested in the case $z> 0$ and
$t_0 > 1$ (note that $t_0(0,0) = 1$ and that $t(z,0)$ is strictly increasing). 
Thus, we can assume that the determinant is non-zero for $z> 0$, $t_0> 1$ and for
$x$ sufficiently close to $0$. Hence, by the implicit function theorem 
there are analytic solutions $V(x,z,t_0)$, $N(x,z,t_0)$, $\overline t_1(x,z,t_0)$ 
with $V(0,z,t_0) = -1$, $N(0,z,t_0) = (1-t_0)/z$, $\overline t_1(0,z,t_0) = (t_0-1)/z$.
As explained above we use now the function $\overline t_1(x,z,t_0)$, insert it into
the into the first three equations and obtain the system (\ref{eqRsystem}).
For $x=0$ this is precisely the system (\ref{eqP1system}), where we already
know that $t_0(z,0)$ as a proper $3/2$-singularity. Now we can directly apply
Theorem 2 from \cite{DHai} and obtain that $t_0(z,x)$ has a representation of the form  (\ref{eq32expansion}) which is then sufficient to obtain a central limit theorem
provided that $z_0'(1) \ne 0$. However, by using the system (\ref{eqRsystem}) together
with the equation $R_{uu}R_{TT}-R_{uT}^2 = 0$ and by implicit differentiation 
this can be easily checked. 

\medskip

In general we can proceed in a similar way. We start with the case $k=2$ 
(again with $x$ instead of $x-1$ and with 
$T(z,u,x)$ instead of $F(z,u,x)$):
\begin{align}
        T(z,u,x) &= z Q\left(z,u, T(z,u,x), \Delta T(z,u,x) \right)    \label{eqpertDDE-2} \\
        &\qquad +xzR\big(z,u,x,T(z,u,x),\Delta T(z,u,x), \Delta^2 T(z,u,x)\big). \nonumber
\end{align}
Equivalently we have the equation
\[
P(z,u,T,t_0,t_1,x) = zQ\left( z,u,T, \frac{T-t_0}u \right) + 
xz R\left( z,u,x, T,\frac{T-t_0}u, \frac{T-t_0-ut_1}{u^2}  \right) - T = 0.
\]
Again if $x=0$ this equation simplifies to 
\[
P_1(z,u,T,t_0) = zQ\left( z,u,T, \frac{T-t_0}u \right) 
 - T = 0.
\]
Note that have not multiplied by $u$ or $u^2$ in order to write it 
as a polynomial equation. Actually we use this multiplication in a later step. 
By using the methods of \cite{DrmotaNoyYu} it follows that
$t_0(z,0) = T(z,0,0)$ has a dominant $3/2$-singularity at some $z_0> 0$ (that can be
computed by solving the system $P_1 = P_{1,u} = P_{1,T} = 
P_{1,uu}P_{1,TT} - P_{1,uT}^2 = 0$). 

For $x\ne 0$ the order of the DDE equals $2$ so that we have to solve the system
\begin{align*}
P(z,u_1,\tilde T_1,t_0,t_1,x) &= 0, \quad P_u(z,u_1,\tilde T_1,t_0,t_1,x) &= 0, \quad 
P_T(z,u_1,\tilde T_1,t_0,t_1,x) &= 0, \\
P(z,u_2,\tilde T_2,t_0,t_1,x) &= 0, \quad P_u(z,u_2,\tilde T_2,t_0,t_1,x) &= 0, \quad 
P_T(z,u_2,\tilde T_2,t_0,t_1,x) &= 0
\end{align*}
for the unknown functions
$u_1(z,x)$, $u_2(z,x)$, $t_0(z,x)$, $t_1(z,x)$, $\tilde T_1(z,x) = T(z,u_1(z,x),x)$, and
$\tilde T_2(z,x) = T(z,u_2(z,x),x)$. From the equation
\[
P_T = zQ_{y_0} + \frac{zQ_{y_1}}u + x \left( zR_{y_0} + \frac{zR_{y_1}}u 
+ \frac{zR_{y_2}}{u^2} \right) - 1 = 0
\]
that we multiply by convenience by $u^2$
\[
u^2 P_T = zu^2Q_{y_0} + zuQ_{y_1} + x \left( zu^2R_{y_0} + {zuR_{y_1}} 
+ {zR_{y_2}} \right) - u^2 = 0
\]
it follows (similarly as in the above example) that the solutions
$u_2(z,x)$ satisfies (for $z\ne 0$)
\[
u_2(z,x) \approx - x \frac{R_{y_2}}{Q_{y_1}}.  
\]
Recall that by assumptions $Q_{y_1} \ne 0$ and $R_{y_2} \ne 0$.
Thus, we are in precisely the same situation as above. We set 
$\overline u_2 = xV$ and $\overline T_2 = t_0 + xN$. By using the assuption that $R$ 
is linear in $y_2$ it follows that the system of the second third equations
can be rewritten into a system for $V$, $N$, and $t_2$ with the property that
there is a smooth transition to $x=0$. For example, the first equation
rewrites to 
\[
z Q(z,xV,t_0 + xN, N/V) + zx z Q(z,xV,s,t_0 + xN, N/V,(N-Vt_1)/(xV^2)) - t_0- xV = 0,
\]
where due to the linearity in $y_2$ the factor $1/x$ cancels.
As above this (new) system can be solved for $x=0$ and
we get analytic solutions $V = \overline V(z,x,t_0)$, $N = \overline N(z,x,t_0)$, 
$t_2 = \overline t_2(z,x,t_0)$ etc.

This procedure works in a very similar way if $k> 2$ but one has to take care
of an additional feature. In this case we have to deal with a system of $3k$ equations
with solution function $u_1,\ldots,u_k$, $\tilde T_1, \ldots, \tilde T_k$, $t_1, \ldots, t_k$,
where the solutions functions $u_2,\ldots,u_k$ are now of order $x^{1/(k-1)}$. 
Again one has solve first the last $3(k-1)$ equations with parameters $z,x,t_0$.
Here we use the substitutions $\overline u_j = X V_j$ and 
$\overline T_j = t_0 + X V_j t_1 + \cdots + X^{k-1} V_j^{k-1} t_{k-1} + 
X^k N_j$, $2\le j\le k$,
where $X = x^{1/(k-1)}$. As in the previous cases we get analytic solution functions
for $z\ne 0$ and for $X$ close to zero. Furthermore, the solution functions
$\overline t_j$, $2\le j\le k$, depend only on $X^{k-1} = x$, that is, we can 
finally reduce the problem to a system of $3$ equations of the type (\ref{eqRsystem}), and we are done.

%----------------------------------------------------------

\section{Pattern counts in simple triangulations}\label{sec:trian}

In this section, we apply the theorem in Section 2 to the counts of pattern occurrences in simple triangulations. It is known that pattern counts of triangulations, where the patten cannot self-intersect, satisfy a central limit theorem~\cite{GaoWormald}. In the following, we also consider maps with arbitrary root face degree as patterns in a map and establish first a functional equation for the generating function of simple triangulations with an additional variable counting pattern occurrences of a map which cannot self-intersect. Subsequently, we derive a central limit theorem for patterns with root face valency $4$ from our Theorem~\ref{thm:clt1dde}.

\begin{definition}
    A \emph{planar map} is a connected planar graph (with loops and multiple edges allowed) embedded onto the sphere. If one of its edges is oriented, we call it \emph{rooted}. The oriented edge is further called the \emph{root edge} and the vertex from which the root edge is pointing away the \emph{root vertex}.
    A planar map separates the surface into several connected regions called \emph{faces}. The face to the left of the root edge is called the \emph{root face} or the \emph{exterior face}.
    The \emph{valency} of a face is the number of edges incident to it, bridges being counted twice. A face of valency $m$ is called an \emph{$m$-gon}.
    We define the \emph{boundary} of a rooted map as the set of all edges and vertices incident with the root face.
    A \emph{simple map} is a map without multiple edges or loops. A \emph{simple near-triangulation} is a simple map in which all faces except the root face have degree 3 and there are no interior edges connecting two vertices of the boundary. If the root face of a simple near-triangulation is of degree 3, we call the map a \emph{simple triangulation}. We denote the set of simple near-triangulations by $\mathcal{T}$ and their generating function by
        \[
            T(z,u) = \sum_{n,j\geq 0} t_{j,n}u^jz^n,
        \]
    where $u$ marks the root face valency minus $3$, $z$ marks the number of interior edges minus the root face valency plus $3$ (the exponent of $u$), divided by $3$. 
    
    These counting variables were carefully chosen by Tutte in~\cite{Tutte1962a} with respect to the decomposition of these maps and the singularity analysis of the resulting functional equation.
\end{definition}

Ultimately, we are interested in the number of pattern occurrences of a fixed map ${\bf p}$ in a random planar triangulation.

\begin{definition}[Pattern occurrences~\cite{BenderGaoRichmond} ]\label{def:submaps}
    Let~\map{p} be a rooted map. We say that~\map{p} occurs as a pattern in a
    map~\map{m} if~\map{m} can be obtained by extending~\map{p} in the following way:
    \begin{enumerate}
        \item[(a)] adding vertices to the interior of the root face of~\map{p},
        \item[(b)] adding edges with their endpoints being either vertices or edges from the
        boundary of~\map{p} or newly created vertices,
        \item[(c)] rerooting the so obtained map in such a way that its new root face is not contained in an interior face of~\map{p}.
    \end{enumerate}
    We say, $\map{p}$ cannot \emph{self-intersect} if any interior face of a pattern occurrence of $\map{p}$ cannot be the interior face of another pattern occurrence unless they have a root face preserving isomorphism (see Figure~\ref{fig:placeholder} for an example).
\end{definition}

\begin{proposition} \label{thm:patts}
    Let $\map{p}$ be a near-triangulation with $e$ edges, root face valency $v$ and $r$ rotational symmetries which cannot self-intersect. Then the generating function $T(z,u,x)$ for planar near-triangulations enriched with the variable $x$ that counts the number of occurrences of $\map{p}$ as a pattern satisfies the equation
    \begin{align*}
        T(z,u,x) &= 1+z\Delta T(z,u,x) + z\frac{T(z,u,x)^2}{1-uT(z,u,x)}\\
        &\qquad+r(x-1)\frac{z^{\frac{e+v-2}{3}}}{u^{v-2}}N_{v-1}(z,u,T(z,u,x),\Delta T(z,u,x), \dots, \Delta^{v-1}T(z,u,x))
    \end{align*}
    where $\frac{z^{\frac{e+v-2}{3}}}{u^{v-2}}N_{v-1}(z,u,y_0,\dots,y_{v})$ is a polynomial in $z,y_1,\dots,y_v$ and rational in $u$ and $T$.
\end{proposition}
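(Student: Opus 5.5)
We would prove Proposition~\ref{thm:patts} in two stages. First we recover the $x=1$ equation. Then we account for pattern occurrences by the standard ``mark and substitute'' argument, i.e.\ $x = 1+(x-1)$ applied occurrence by occurrence.

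\emph{Stage 1: the case $x=1$.} Tutte's root-edge deletion decomposition of simple near-triangulations gives
\[
T = 1 + z\Delta T + zT^2/(1-uT).
\]
Remove the root edge from a near-triangulation. Either the boundary simply grows (one inner triangle merges into the outer face), which gives the term $z\Delta T$ after the weight bookkeeping of $z$ and $u$ (in Tutte's normalization the third vertex of the triangle is interior). Or the third vertex of the triangle adjacent to the root lies on the boundary; then the map splits into a sequence of pieces, which gives the geometric factor $T^2/(1-uT)$. We take this equation from \cite{Tutte1962a} and just check that the variables match the normalization in the definition of $T(z,u)$.

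\emph{Stage 2: inclusion--exclusion in $x$.} Write $x^{\#\mathrm{occ}} = \sum_{S\subseteq \mathrm{occ}} (x-1)^{|S|}$. Then $T(z,u,x)$ is the generating function of near-triangulations with a distinguished set of marked occurrences of $\map{p}$, each weighted by $(x-1)$. Because $\map{p}$ cannot self-intersect, distinct marked occurrences have disjoint interiors (or coincide up to root-preserving isomorphism). Hence a map with marked occurrences is equivalent to a map in which each marked occurrence is collapsed to a single $v$-gonal ``hole'' face, decorated by a copy of $\map{p}$. There are $r$ ways to glue $\map{p}$ into a $v$-gon, one for each rotational symmetry, which produces the factor $r$.

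We then run the same root-edge decomposition on these decorated maps. The only new case is when the inner face adjacent to the root edge is a marked $v$-gon rather than a triangle. Deleting the root edge together with the interior of $\map{p}$ opens the boundary by $v-2$ new edges. The $v-2$ new boundary vertices either are new or coincide with boundary vertices, splitting the map into several near-triangulation pieces (the simplicity condition forbids chords). Summing over the possible coincidence patterns expresses this case as a polynomial in $T,\Delta T,\dots,\Delta^{v-1}T$ with coefficients rational in $u$ and $T$. The discrete derivatives arise exactly as $\Delta T$ does in Stage 1, once per new boundary edge, removing up to $v-1$ powers of $u$, whence the prefactor $u^{-(v-2)}$. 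The weight $z^{(e+v-2)/3}$ comes from the $e$ edges of $\map{p}$ and the shift in boundary length, using the definition that $z$ counts (interior edges $-$ root valency $+3$)$/3$. Adding this term, with its factor $r(x-1)$, to the Stage 1 equation gives the stated equation, with $N_{v-1}$ collecting the gluing cases.

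The main obstacle is Stage 2's bookkeeping. We must show that the collapsing map is a genuine bijection, with no double counting when marked occurrences touch the boundary or each other; this is where non-self-intersection and condition (c) of Definition~\ref{def:submaps} are used. We must also check that the exponents of $z$ and $u$ work out to exactly $z^{(e+v-2)/3}u^{-(v-2)}$, with the remainder polynomial in $z,y_1,\dots,y_v$. We would first verify this bookkeeping on the case $v=4$, then argue that the general case follows identically.
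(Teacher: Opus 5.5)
Your top-level plan matches the paper's. You take Tutte's three terms and add $(x-1)$ times the generating function of maps whose root edge borders an occurrence of $\map{p}$, computed by deleting the root edge together with the interior of that occurrence. Your expansion $x^{N}=\sum_{S}(x-1)^{|S|}$ over sets $S$ of occurrences is actually a cleaner justification of the factor $x-1$ than the paper's wording.

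The gap is in how you enumerate what is left after the deletion. You claim the left-over map splits into chord-free near-triangulation pieces because ``the simplicity condition forbids chords''. That condition only constrains the \emph{original} map. Take the boundary path $b,c_1,\dots,c_{v-2},a$ of the occurrence. A vertex $c_i$ that was interior in the original map may well be adjacent to outer-boundary vertices, or to other $c_j$. These edges are legal before the deletion and become chords afterwards.

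Handling them is the heart of the paper's proof. The left-over map is a sequence of single edges and \emph{multifan-triangulations}. Each multifan meets the occurrence along at least two edges. It decomposes into near-triangulations touching the occurrence along at least one edge, alternating with sequences (fans) of near-triangulations that meet the occurrence in a single vertex $c_i$ only. In the paper this structure yields the following:
\begin{itemize}
\item the fans produce the factors $1/(1-uT)$ and the geometric series in $F(z,u,x,w)$;
\item the discrete derivatives $\Delta^kT$ come from a lower bound on the root-face valency of a piece touching the occurrence along $k+2$ edges;
\item $N_{v-1}$ is a coefficient extraction in $w$, followed by an explicit $z$-exponent count: single edges give $z^{-2/3}u^2$, fan boundary edges give $z^{1/3}$, and integrality follows from $3\mid e-v$.
\end{itemize}
Without the fans, your sum over coincidence patterns undercounts. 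Nothing in your argument then produces the rational dependence on $T$ asserted in the statement.

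The same misconception is already in your Stage~1. In a chord-free near-triangulation with root-face valency at least $4$, the third vertex of the root triangle cannot lie on the boundary, since at least one of its two other edges would be a chord. Tutte's splitting case is the one where that vertex, interior before the deletion, has edges to the boundary that become chords afterwards. This is exactly the one-vertex version of the fans missing in Stage~2.

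Finally, the factor $r$ is not ``one gluing per rotational symmetry''. Symmetries identify insertions of $\map{p}$ into the $v$-gon next to the root edge rather than creating new ones; for the rotationally symmetric diamond the paper's factor is $1$. That count needs to be redone as well.
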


\begin{proof}
    The first terms of the equation count triangulations where the root edge is not incident to any pattern occurrences. These cases were treated in~\cite{Tutte1962a} and consist of one of the following three cases:
    \begin{enumerate}
        \item a single triangle marked by $1$ since the root face valency is $3$ and there are no interior edges (and thus $u^{3-3}z^{(0-0)/3}=1$).
        \item a (near-)triangulation with root face valency at least $4$ without interior edges connecting two vertices on its boundary, to which we attach a new root edge, decreasing the root face valency by $1$ and increasing the number of interior edges by $2$. Thus, this case is counted by $z\Delta T(z,u,x)$.
        \item a near-triangulation with interior edges connecting the vertex to which the root edge points to with at least one other point on the boundary to which we attach a new root edge. This near-triangulation can in turn be decomposed into a sequence of at least two triangulations without interior edges connecting two vertices on the boundary. Careful computations on the number of interior edges and root face valency yield the term $z\frac{T(z,u,x)^2}{1-uT(z,u,x)}$.
    \end{enumerate} 
    The last term corresponds to the case where the root edge is incident to a pattern occurrence. If we delete the interior edges of this pattern occurrence, we are left with a map that decomposes into a sequence of single edges or maps that themselves decompose into a non-empty sequence of near-triangulations glued along edges (see Figure~\ref{fig:triang}), which we call \emph{multifan-triangulations}. 
    \begin{figure}
        \centering
        \includegraphics[width=\linewidth, page=1]{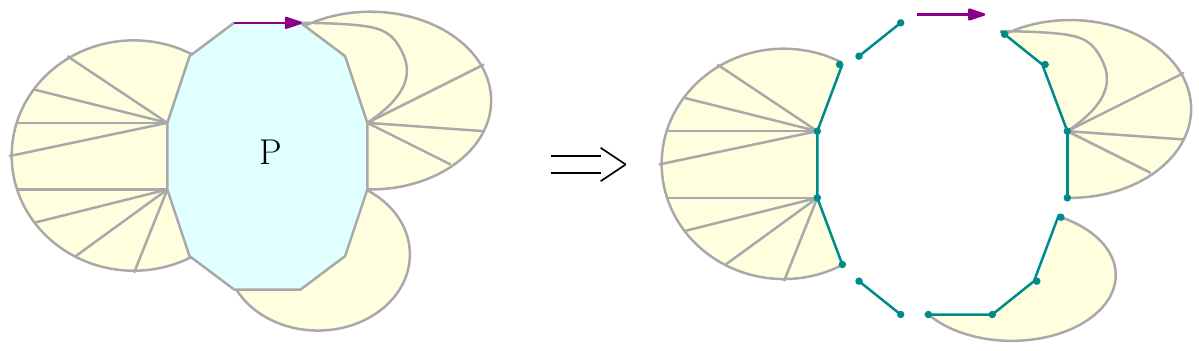}
        \caption{Decomposition of triangulations with a root edge incident to a labeled pattern occurrence into a sequence of edges and multifan-triangulations}
        \label{fig:triang}
    \end{figure}
    We denote their generating function by $F(z,u,x,w)$, where $z,u,x$ as above and $w$ is an auxiliary variable that marks the length of the boundary which is incident to the deleted pattern occurrence. 
    
    We now aim to describe $F(z,u,x,w)$ in terms of $T(z,u,x)$ and its discrete derivatives. \\
    First of all, if $\ell$ is the root face valency of a multifan-triangulation, then the number of edges on the boundary incident to the pattern occurrence is at least $2$ and at most $\ell-1$ (otherwise an interior edge would connect two vertices on the boundary of the original equation).
    Further, any vertex on the boundary where both incident boundary edges lie on the boundary of the pattern occurrence, might be connected to another vertex on the boundary of the multifan-triangulation. Interior edges connecting vertices on the boundary of the multifan-triangulation therefore decompose the map into sequences of near-triangulations glued together along the edges. Since the image that we can draw looks like a sequence of fans glued together, we chose the name multifan-triangulations. 
    In particular, we decompose the multifan-triangulation into a near-triangulation with at least one edge incident to the pattern occurrence and an alternating sequence of sequences of near-triangulations with no edges incident to the pattern occurrence and near-triangulations with at least one edge incident to a pattern occurrence, ending with the latter. (See Figure~\ref{fig:multifan})

    \begin{figure}
        \centering
        \includegraphics[width=0.75\linewidth, page=2]{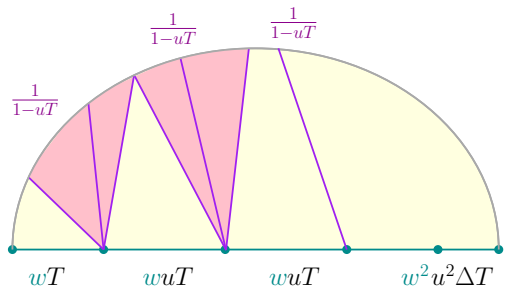}
        \caption{Decomposition of multifan-triangulations into an alternating sequence of near-triangulations incident to the pattern occurrence and sequences of near-triangulations}
        \label{fig:multifan}
    \end{figure}
    
    Note that none of these gluing operations create/destroy pattern occurrences since the pattern is a near-triangulation without interior edges connecting vertices on its boundary.
    
    We therefore only have to compute the exponents of $z$ and $u$ resulting from gluing near-triangulations.

    So, assume we glue two maps with root face valencies $v_1$ and $v_2$ and interior edges $i_1, i_2$ respectively, along an edge incident to a vertex $v$. Then the root face valency of the obtained map equals $v_1+v_2-2$. So, $u$ counting the root face valency - $3$ gets the exponent $v_1+v_2-5 = (v_1-3)+(v_2-3)+1$ and therefore we gain a factor $u$. The number of interior edges on the other hand increases by $1$ and therefore the exponent of $z$, counting the number of interior minus the exponent of $u$, divided by three sums up perfectly as
    \[
        \frac{i_1+i_2+1-v_1-v_2+5}{3}=\frac{(i_1-v_1+3)+(i_2-v_1+3)}{3}.
    \]
    Therefore, no additional factors of $z$ will appear. 
    
    By the same arguments, we arrive at the conclusion that gluing a sequence of maps results in an additional factor $u$ per map. 

    Finally, we note that if $\ell$ edges of a near-triangulation in the multifan-triangulation are incident to the pattern occurrence, the root face valency of the near triangulation has to be at least $\ell-1$ and no multifan-triangulation has only a single edge incident to the pattern occurrence.
    
    Hence, the generating function is described by
    \begin{align*}
        F(z,u,x,w) 
        &= \frac{wT(z,u,x)+\sum_{k\geq 0} w^{k+2}u^{k}\Delta^{(k)}T(z,u,x)}{1-\frac{u}{1-uT(z,u,x)}\left(wT(z,u,x)+\sum_{k\geq 0} w^{k+2}u^k\Delta^{(k)}T(z,u,x)\right)} - wT(z,u,x)
    \end{align*}
    The map which is left over is then decomposed into a sequence of edges and multifan-triangulations excluding a single multifan-triangulation with root face valency $\ell>v$ and $\ell-1$ edges incident to the root face. When we set up the generating function for this sequence, we multiply each generating function of a multifan-triangulation by $u^3$, and after combining divide by $u^3$ to correctly produce the exponent of $u$ corresponding to the root face valency minus $3$. Further, when we consider the exponent of $z$ each multiplication of $u^3$ corresponds to an increase of $1$ in the exponent of $z$.
    
    An edge corresponds to a term $z^{-\frac{2}{3}}u^2$ as it increases the root face valency by $2$. When we glue the pattern to the left over map, the edges do not produce new interior edges, but the edges on the boundaries of the fan triangulations contribute each $z^{\frac{1}{3}}$ to the generating function. In total, we thus get
    \begin{align*}
        N_{v-1}(z,u,x) &= \frac{z}{u^3}[w^{v-1}u^{\geq(v+1)}] \frac{z^{-\frac{2}{3}}u^2w+z^{-1}u^3F(z,u,x,z^{\frac{1}{3}}w)}{1-(z^{-\frac{2}{3}}u^2w+z^{-1}u^3F(z,u,x,z^{\frac{1}{3}}w))}
    \end{align*}
    which in turn can be expressed as a polynomial in $T(z,u,x),\Delta T(z,u,x), \dots \Delta^{(k)} T(z,u,x)$, since any function $[u^\ell] T(z,u,x) = \Delta^{(\ell)} T(z,u,x) - u\Delta^{(\ell+1)} T(z,u,x)$.
    
    Now, let the left-over map consist of $v-m$ single edges and the rest fan triangulations. In that case, the exponent of $z$ in the above expression is $z^{(m-v)\frac{2}{3}+i}$, where $i\in \mathbb{Z}_{\geq -1}$. By gluing the pattern to the left over map, we decrease the exponent of $u$ by $v-2$ and consequently increase the one of $z$ by $\frac{v-2}{3}$ and further, we create $m-1$ new internal edges, such that the new exponent of $z$ will be
    \[
        z^{\frac{e}{3}+\frac{v-2}{3}+(m-v)\frac{2}{3}+i+\frac{m-1}{3}} = z^{\frac{e-v+3m+3i-3}{3}}.
    \]
    Since \map{p} is a near-triangulation $e-v$ is divisible by three and the above is indeed an integer.
\end{proof}

We illustrate once again the decomposition procedure along an example.
\begin{figure}
        \centering
    \includegraphics[width=0.2\linewidth]{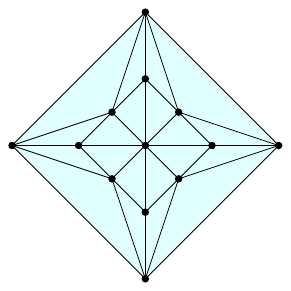}
    \caption{The diamond pattern which cannot self-intersect}
    \label{fig:placeholder}
\end{figure}

\begin{example}[Diamonds] A diamond is a rotational symmetric near-triangulation with root face valency four and $28$ internal edges as depicted in Figure~\ref{fig:placeholder}. It is easy to see that it cannot self-intersect, since none of the interior vertices can lie on the boundary of a second pattern occurrence of a diamond.

By Proposition~\ref{thm:patts}, the generating function of near-triangulations with $x$ marking the number of diamonds satisfies the equation
    \begin{align}\label{eq:diam}
        T(z,u,x) &= 1+z\Delta T(z,u,x) + z\frac{T(z,u,x)^2}{1-uT(z,u,x)} \nonumber\\
        &\qquad+\frac{(x-1)z^{\frac{28+2}{3}}}{u^2}F_{3}(z,u,T(z,u,x),\Delta T(z,u,x), \Delta^{2}T(z,u,x)),
    \end{align}
    where
    \begin{align*}
        &F_{3}(z,u,T(z,u,x),\Delta T(z,u,x), \Delta^{2}T(z,u,x)) \\
        &=\frac{z}{u^3}\left(z^{-2}u^6+2z^{-\frac{2}{3}}u^2\frac{u^3}{z}\left(z^{\frac{2}{3}}T+\frac{z^{\frac{2}{3}}uT^2}{1-uT}\right)+2\frac{u^3}{z}\frac{zu^2T\Delta T}{1-uT}+\frac{u^3}{z}\frac{zu^2T^3}{(1-uT)^2}+\frac{u^3}{z}zu^2\Delta^{(2)} T\right)\\
        &=\frac{u^3}{z}+2u^2\left(T+\frac{uT^2}{1-uT}\right)+2z\frac{u^2T\Delta T}{1-uT}+z\frac{u^2T^3}{(1-uT)^2}+zu^2\Delta^{(2)}T
    \end{align*}
    Indeed, $F_3$ corresponds to the following cases 
    \begin{enumerate}
        \item the map itself is a diamond
        \item on of the edges left or right to the root edge is a single edge attached to a 
        \begin{enumerate}
            \item near-triangulation
            \item multifan triangulation with a single fan
        \end{enumerate}
        \item all edges in the left-over map are incident to a multifan triangulation with 
        \begin{enumerate}
            \item a single fan at one of the two interior vertices along the glued boundary
            \item fans at each interior vertex along the boundary
        \end{enumerate}
        \item a near triangulation with root face valency at least $5$.
    \end{enumerate}
    By Theorem~\ref{thm:clt1dde}, we can conclude that the number of diamond occurrences has an expectation and variance linear in $n$ and satisfies a central limit theorem.\\
    
    Note that the fact that the exponent of $z$ in~\eqref{eq:diam} is divisible by $3$ is due to the fact that the root face valency $4$ is $1$ modulo $3$. For a non-intersecting pattern with root face valency $5$ the exponent of $z$ in~\eqref{eq:diam} would indeed be of the form $\frac{3m+2}{3}$ for some non-negative $m\in \mathbb{Z}$ and only add up to an integer in combination with the exponents of $z$ in $F_4$. This is easy to see if one considers the case, where the map is in fact the pattern and the corresponding term in $F_4$ contributes $zu^{-3}z^{-\frac{8}{3}}u^8 = z^{-\frac{5}{3}}u^5$.
\end{example}

Finally, we directly obtain the following central limit theorem for pattern counts of non-selfintersecting patterns in simple triangulations.

\begin{theorem}
    Let $\map{p}$ be a near-triangulation which cannot self-intersect and $X_n$ the (random) number of occurrences of $\map{p}$ in a uniformly random simple triangulation with $n$ edges. Then there exist computable constants $\mu, \sigma>0$ such that
    \[
        \frac{X_n - \mu n}{\sqrt{n}} \longrightarrow {N}(0,\sigma^2)
    \]
    where ${N}(0,\sigma^2)$ is a normal random variable with mean $0$ and variance $\sigma^2$. 
\end{theorem}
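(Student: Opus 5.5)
The plan is to combine Proposition~\ref{thm:patts} with Theorem~\ref{thm:clt1dde}.

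\textbf{Step 1: the functional equation.} By Proposition~\ref{thm:patts}, the generating function $T(z,u,x)$ of simple near-triangulations, with $x$ marking occurrences of $\map{p}$, satisfies a DDE of the shape (\ref{eqpertDDE}). Here
\[
Q(z,u,y_0,y_1)=\frac{1}{z}+y_1+\frac{y_0^2}{1-uy_0},
\]
after moving one factor $z$ out of the constant term, or equivalently after rescaling; and $R=r\,z^{(e+v-2)/3-1}u^{-(v-2)}N_{v-1}$. The order of the equation is $1$ at $x=1$ and $k=v-1$ for $x\ne1$, so we may assume $v\ge3$. If $v=3$, the equation has order $2$; if $v>3$, it has higher order.

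\textbf{Step 2: checking the hypotheses of Theorem~\ref{thm:clt1dde}.}
\begin{itemize}
\item $Q$ is rational with non-negative coefficients and is analytic for $|uy_0|<1$. This region contains the relevant singular values, since Tutte's solution at $x=1$ has $u\tilde T<1$ at the critical point.
\item $Q_{y_1}=1\ne0$.
\item $Q_{y_0y_0}\ne0$.
\item $Q_u\ne0$ whenever $y_0\ne0$.
\item $Q(z,0,0,0)\ne0$ comes from the constant term $1$.
\item The dependency graph at $x=1$ is strongly connected. Coefficient $i$ of $T$ depends on coefficient $i+1$ through $\Delta T$, and on coefficient $0$ through the quadratic term.
\item $R$ is linear in the top derivative $y_{v-1}$. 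From the proof of Proposition~\ref{thm:patts}, the highest discrete derivative enters only through the term "near-triangulation glued along $v-1$ boundary edges", which is $\Delta^{v-1}T$ times factors in lower quantities. This matches $F_3$ in the diamond example, which is linear in $\Delta^{(2)}T$. The coefficient of $y_k$ in $R$ must also be nonzero, which the $u^2$-asymptotics of $u_2$ needs.
\item Non-negativity of the coefficients of $T(z,u,x)$ is automatic, since they count maps.
\end{itemize}
Apart from the linearity of $R$ in $y_k$, these checks are routine. The singular values are those of Tutte's simple triangulations, with $\rho$ explicit.

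\textbf{Step 3: passing from $T(z,0,x)$ to triangulations with $n$ edges.} $T(z,0,x)$ counts simple triangulations, and the exponent of $z$ is (number of interior edges)$/3$, i.e.\ $n=3+3m$. So the residue-class restriction $n\equiv j \bmod d$ of Theorem~\ref{thm:clt1dde} is just the lattice on which triangulations exist, and the index shift is affine. Theorem~\ref{thm:clt1dde} then gives a Gaussian limit with $\mathbb E X_n\sim\mu n$ and $\mathbb V X_n\sim\sigma^2 n$ after rescaling $\mu,\sigma$ by the factor $3$.

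\textbf{Main obstacle: strict positivity of $\mu$ and $\sigma$.} Theorem~\ref{thm:clt1dde} only yields non-negative constants.
\begin{itemize}
\item \emph{$\mu>0$.} I would argue that $\mu=-\rho'(1)/\rho(1)>0$ because $z_0(x)$ is strictly decreasing in $x>0$. Positive coefficients in $x^{\ge1}$ exist: any large triangulation can contain a copy of $\map{p}$. Moreover $\mu=0$ would force $\mathbb E X_n=o(n)$, contradicting a linear lower bound. Such a bound comes from a substitution argument: a positive fraction of triangulations contain $\Theta(n)$ disjoint copies of a fixed small region, which can be replaced by $\map{p}$.
\item \emph{$\sigma>0$.} This is the standard non-degeneracy condition $\rho''\rho+\rho'\rho-\rho'^2\ne0$ at $x=1$. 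I would first try the usual combinatorial trick: exhibit two local configurations of equal size that differ by one pattern occurrence. Then $X_n$ is not asymptotically concentrated at a linear function, and this forces $\sigma^2>0$ via the quasi-power framework, following Gao and Wormald~\cite{GaoWormald}.
\end{itemize}
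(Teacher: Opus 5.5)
Your route is the paper's: there the theorem is obtained in one line by feeding the equation of Proposition~\ref{thm:patts} into Theorem~\ref{thm:clt1dde}. Your hypothesis checks (strong connectivity via $t_i\to t_{i+1}$ and $t_i\to t_0$, $Q_{y_1}=1$, linearity of $R$ in the top derivative because it only enters through the single-piece term, non-negativity) are more explicit than anything the paper gives. You are also right that Theorem~\ref{thm:clt1dde} only yields $\mu,\sigma\ge 0$. The paper never addresses the strict inequalities, so this is where the statement as written needs extra work, and your sketch does not yet supply it.

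\textbf{The gap: $\sigma>0$.} Two equal-size configurations differing by one occurrence only show that $X_n$ is not deterministic. In the quasi-power setting, $\sigma^2=0$ means $\mathbb{V}X_n=O(1)$, which is compatible with that; what you need is $\mathbb{V}X_n\to\infty$. The standard fix has two parts. First, show that w.h.p.\ there are at least $cn$ disjoint sites where either configuration can be inserted without changing the size, the simplicity, or occurrences elsewhere (this is where non-self-intersection enters). Second, condition on everything outside these sites; $X_n$ becomes a sum of at least $cn$ independent non-degenerate terms, and the law of total variance gives $\mathbb{V}X_n\ge c'n$. Alternatively, import positivity from the CLT of \cite{GaoWormald} that the paper cites, since $\mu$ and $\sigma$ are determined by the law of $X_n$.

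Moreover, no argument can give $\sigma>0$ in the stated generality. Take $\map{p}$ to be a single triangle; it is a simple near-triangulation and trivially cannot self-intersect. Then $X_n$ counts, up to the symmetry factor, the non-root faces. Euler's formula fixes that number as a function of $n$, so $\sigma=0$, and such degenerate patterns have to be excluded.

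\textbf{The argument for $\mu>0$.} That $z_0(x)$ is strictly decreasing does not follow from the existence of coefficients with $x^{\ge 1}$: a uniformly bounded number of occurrences would leave $z_0(x)$ constant. Only your second argument, a linear lower bound on $\mathbb{E}X_n$, works. That bound can be quoted from a submap law in the style of \cite{BenderGaoRichmond}.

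\textbf{A smaller point: the order of the equation.} In the diamond example ($v=4$) the top derivative is $\Delta^{(2)}T$. The same count in general (one near-triangulation glued along the $v-1$ pattern edges, divided by $u^{v-2}$) gives order $v-2$, not $v-1$. This is harmless for $v\ge 4$. For $v=3$, however, the equation stays first order for all $x$, so Theorem~\ref{thm:clt1dde}, which needs $k\ge 2$, does not apply. That case is a regular perturbation and should be handled by \cite{DHai}.
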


\section{Conclusions}
We established that singularly perturbed discrete differential equations admit a smooth transition from order $1$ to order $k\ge2$ in the case where the equation is linear in the highest discrete derivative. Under the usual assumptions of strong connectedness and non-degeneracy, the dominant singularity varies analytically with the perturbation parameter $x$, which allows us to transfer the singular expansion and derive a central limit theorem for associated counting parameters.
A natural open problem is whether such a smooth transition persists without the linearity assumption in the highest-order discrete derivative. In that setting, the elimination procedure becomes substantially more involved and requires careful handling.
On the combinatorial side, we restricted attention to patterns in triangulations that cannot self-intersect. Allowing self-intersecting patterns leads to more intricate decompositions, where the resulting functional equations become technically cumbersome due to the delicate choice of counting variables and the restriction to simple triangulations. In particular, the highest discrete derivative does not necessarily appear linearly in the equation. An alternative approach based on probabilistic arguments appears promising and is currently under investigation in joint work with Nicolas Curien and Alice Contat.

Further applications of Theorem~\ref{thm:clt1dde} and extensions of it are expected in various other combinatorial structures with generating functions satisfying discrete differential equations such as fighting fish or Tamari intervals.

%++++++++++++++++++++++++++++++++++++++++++++++++++++++++++
%++++++++++++++++++++++++++++++++++++++++++++++++++++++++++
\section{References}
\bibliography{Bib_maps.bib}{}
\bibliographystyle{plain}
%++++++++++++++++++++++++++++++++++++++++++++++++++++++++++
%++++++++++++++++++++++++++++++++++++++++++++++++++++++++++
\end{document}